\author{Nicholas Iammarino}
\title{The Principal Component of the Jets of a Graph}
\newcommand{\J}{\mathcal{J}}
\newcommand{\jo}[1]{^{\scaleto{(#1)}{5pt}}}
\newcommand{\p}{\mathfrak{p}}
\newcommand{\q}{\mathfrak{q}}
\newcommand*{\cd}{\fontfamily{pcr}\selectfont}
\newtheorem{theorem}{Theorem}[section]
\newtheorem{corollary}[theorem]{Corollary}
\newtheorem{proposition}[theorem]{Proposition}
\newtheorem{lemma}[theorem]{Lemma}
\newtheorem{definition}[theorem]{Definition}
\theoremstyle{definition}
\newtheorem{example}[theorem]{Example}
\newtheorem{remark}[theorem]{Remark}
\begin{document}

\maketitle

\begin{abstract}
	We define the $s$-order principal component of the jets of a graph and give a description of the primary decomposition of its edge ideal in terms of the minimal vertex covers of the base graph.
	As an application, we show the $s$-order principal component of the jets of a cochordal graph is cochordal, and connect this to Fr\"{o}berg's theorem on the linear resolution of edge ideals of cochordal graphs.
	An appendix is provided describing some computations of jets in the computer algebra system \href{https://faculty.math.illinois.edu/Macaulay2/}{Macaulay2}.
\end{abstract}

\section*{Introduction}
In this paper, we explore the idea of a jet scheme as applied to graphs.
The concept of jets was introduced by John Nash in \cite{JN}.
For a set of coordinates $x_1, \dots, x_n$ in affine $n$-space, Nash describes the space of \emph{arcs} parametrized by the family of formal power series $x_i(t)= \sum_{\alpha=0}^\infty x\jo{\alpha}_i t^\alpha$, $1 \leq i \leq n$.
By fixing an upper bound $s \in \mathbb{Z}$ to this power series, we restrict this family to the $s$-jets of the coordinates.
The geometry of jets has been explored by, Ein and Musta\cb{t}\u{a} \cite{EM}, Cornelia Yuen \cite{CY}, Paul Vojta \cite{PV} and many others.
Of particular interest to this paper is the work of Ko\v{s}ir and Sethuraman \cite{KS} which examines the jets of determinantal varieties.
They study the irreducible decomposition of such varieties and discover that they can isolate a \emph{principal} component, which can be described as the closure of the jets of the smooth locus of the base space.
In this paper, we use a similar description to isolate a component of the variety of the edge ideal of a graph (though it turns out not to be irreducible) and explore the connection of this component to minimal vertex covers of the base graph.\\
\\
Jets of graphs are defined by Galetto, Helmick and Walsh in \cite{GHW}, where they examine some properties of graphs, such as their minimal vertex covers and chordality, and whether or not those properties carry over into the jets of a graph in a meaningful way.
We build on that work here by exploring some basic properties of the edge ideals of the jets of graphs.
Starting with a minimal vertex cover $W$ of a graph $G$, we describe the ideal generated by the $s$-jets of elements of $W$, denoted $\J_s(W)$, in terms of the $s$-jets of the edge ideal of $G$, denoted $\J_s(I(G))$ (\cref{vc colon}).
We then show that $\J_s(W)$ is both a minimal prime and a primary component of $\J_s(I(G))$ (\cref{vc ideal prop}).
Following the work of Ko\v{s}ir and Sethuraman\cite{KS}, we give a definition for the $s$-order principal component of a graph (\cref{principal}), describe it in terms of the minimal vertex covers of the base graph (\cref{pc vc theorem}) and show that the principal component is itself a graph (\cref{ideal of pc} and \cref{pc graph}).
Finally, we show that if a graph has an edge ideal with a linear resolution, then so does its principal component (\cref{fberg}).\\
\\
This paper was produced as a master's thesis at Cleveland State University.  It is the culmination of several semesters of work under the guidance and tutelage of Federico Galetto, the fruits of which also include a package for the Macaulay2 language which calculates the jets of various algebraic and geometric objects \cite{jets}, and an accompanying paper \cite{GI} describing the functionality of the package.
We include here an appendix discussing some of this work.
The author is eternally grateful to Dr. Galetto for his patience and wisdom, as well as the ungrudging generosity he showed with his time.

\section{Background}\label{bg}

\subsection*{Graphs}
A graph $G$ can be described as a pair of sets $\{ V(G), E(G) \}$ where $V(G)$ is the set of vertices of $G$ and $E(G)$ is a set of pairs of vertices which form the edges of $G$.
Throughout this paper, all graphs are assumed to be simple and connected.
The following definitions, which can be found in \cite{VT}, allow the construction of an ideal corresponding to the edge set and set of minimal vertices of a graph.

\begin{definition}\label{graphdefs}
	Let $G$ be a graph with vertex set $V(G) = \{ x_1, \dots , x_n \}$.
	\begin{enumerate}
		\item A subset $W \subseteq V(G)$ is a vertex cover if $W \cap e \neq \varnothing$ for all $e \in E(G)$.  A vertex cover is a minimal vertex cover if no proper subset of $W$ is a vertex cover.
		\item The edge ideal corresponding to $G$ is the monomial ideal
		\begin{equation*}
			I(G)= \langle x_ix_j \ | \ \{x_i, x_j \} \in E(G) \rangle \subseteq R = k[x_1, \dots , x_n] \rangle
		\end{equation*}
	\end{enumerate}
\end{definition}

\begin{example}\label{ex:graph}
		Let $R$ be the polynomial ring $k[v_1, \dots, v_5]$ over a field $k$.
	\begin{figure}[h!]
		\centering{}
		\caption{}
		\label{ex:graph:fig}
		\begin{subfigure}{.45\textwidth}
			\centering
			\begin{tikzpicture}[scale=.4, auto]
		    \node[circle,draw](1) at (5,5) {$v_1$};
		    \node[circle,draw](2) at (5,10) {$v_2$};
		    \node[circle,draw](3) at (10,5) {$v_3$};
		    \node[circle,draw](4) at (5,0) {$v_4$};
		    \node[circle,draw](5) at (0,5) {$v_5$};

		    \draw[-](1) to node {}(2);
		    \draw[-](1) to node {}(3);
		    \draw[-](1) to node {}(4);
				\draw[-](1) to node {}(5);
		  \end{tikzpicture}
			\caption{$G_1$}
			\label{ex:graph:fig:a}
		\end{subfigure}
		\begin{subfigure}{.45\textwidth}
			\centering
			\begin{tikzpicture}[scale=.4, auto]
		    \node[circle,draw](1) at (5,10) {$v_1$};
		    \node[circle,draw](2) at (10,6) {$v_2$};
		    \node[circle,draw](3) at (8,0) {$v_3$};
		    \node[circle,draw](4) at (2,0) {$v_4$};
		    \node[circle,draw](5) at (0,6) {$v_5$};

		    \draw[-](1) to node {}(3);
		    \draw[-](1) to node {}(4);
		    \draw[-](2) to node {}(4);
				\draw[-](2) to node {}(5);
				\draw[-](3) to node {}(5);
				\draw[dashed](4) to node {}(5);
		  \end{tikzpicture}
			\caption{$G_2$}
			\label{ex:graph:fig:b}
		\end{subfigure}
	\end{figure}
  Then the vertices of the graphs in \cref{ex:graph:fig} are indeterminates of $R$ and we can form their edge ideals.
	In \cref{ex:graph:fig:a} we have a star on five vertices.
	The edge ideal of this graph is given by its four edges, $I(G_1) = \langle v_1v_2, v_1v_3, v_1v_4, v_1v_5 \rangle$.
	Its minimal vertex covers consist of the set $\{ v_1 \}$ containing only the center vertex, and the set $\{ v_2, v_3, v_4, v_5 \}$ containing all of the outer vertices.
	For $G_2$ in \cref{ex:graph:fig:b} the solid edges yield the monomial ideal $I(G_2) = \langle v_1v_3, v_1v_4, v_2v_4, v_2v_5, v_3v_5 \rangle$ along with five minimal vertex covers corresponding to the non-adjacent triples of vertices $\{ v_1,v_2,v_3 \}, \{ v_1,v_2,v_5 \}, \{ v_1, v_4, v_5 \},  \{ v_2,v_3,v_4 \}$ and $ \{ v_3,v_4,v_5 \}$.
	If we include the dashed edge connecting $v_4$ and $v_5$, updating the edge ideal is simply a matter of adding the ideal generated by that edge: $I(G_2) + \langle v_4v_5 \rangle$.
	For the vertex covers, we lose the set $\{ v_1,v_2,v_3 \}$ as it does not account for the new edge and is therefore not a vertex cover.
	Notice also, that the minimal vertex covers $\{ v_1, v_4, v_5 \}$ and $\{ v_3, v_4, v_5 \}$ contain both of the vertices of the new edge, but we can still find edges containing $v_4$ and $v_5$ whose opposite vertex is not in the cover.
\end{example}

For a geometric interpretation of these edge ideals, we can turn to the following description of their decomposition.
A graph $G$ with vertex set $V(G) = \{ v_1, \dots , v_n \}$ and minimal vertex covers $W_1, \dots, W_t$ has edge ideal $I(G) = \bigcap_{i=1}^t \langle W_i \rangle$ \cite[corollary 1.35]{VT}.
We can write $\langle W_i \rangle = \langle v_{i_1}, \dots, v_{i_r} \rangle$ where the $v_{i_j}$ are vertices of $G$ contained in the vertex cover $W_i$.
The $\langle W_i \rangle$ are clearly prime, and since $I(G)$ is radical and completely described by their intersection, we have its decomposition into associated primes, and therefore a decomposition of the variety $\mathcal{V}(I(G))$ into a union of irreducible components.
Furthermore, the components of $\mathcal{V}(I(G))$  corresponding to the ideals $\langle W_i \rangle$ form coordinate subspaces of $\mathbb{A}^n_k$.
So the variety corresponding to a graph on $n$ vertices is a union of coordinate subspaces of $\mathbb{A}^n$.

\begin{example}\label{graphgeo}
	Consider the following graphs with vertices in $k[x,y,z]$.
	Let $G_1$ be the path of length two with edge set $E(G_1) = \{ \{ x,z \}, \{ y,z \} \}$ and $G_2$ be the $3$-cycle with edge set $E(G_2) = \{ \{ x,y \}, \{ y,z \}, \{ x,z \} \}$.
	Then we have two graphs on three vertices, which we can think of in terms of their corresponding objects in affine $3$-space.
	$G_1$ has two minimal vertex covers, $\{ x, y \}$ and $\{ z \}$, so the variety corresponding to $G_1$ is $\mathcal{V}(I(G_1)) = \mathcal{V}(x,y) \cup \mathcal{V}(z) $ or the union of the $z$-axis and the $xy$-plane.
	Since the minimal vertex covers of $G_2$ are all possible pairs of vertices, we have $\mathcal{V}(I(G_2)) = \mathcal{V}(x,y) \cup \mathcal{V}(x,z) \cup \mathcal{V}(y,z)$ or the union of the three coordinate axes.
\end{example}

In addition to this geometric interpretation, \cite[corollary 1.35]{VT} shows that, for any graph $G$, each associated prime of $I(G)$ is minimal, i.e. $I(G)$ has no embedded primes.
This fact is guaranteed by the minimality of the vertex covers we are using to describe its decomposition.
For a vertex cover $W$, we denote by $W^C$ its complement, which consists of all vertices of $G$ not contained in $W$.
We record two properties of vertex covers in the following remark.

\begin{remark}\label{vertex covers remark}
	Let $G$ be a graph.
	\begin{enumerate}
		\item If $W$ is a minimal vertex cover, for any given $x \in W$ there exists $y \in W^C$ such that $\{ x, y \}$ is an edge of $G$.
		To see this, consider all edges of $G$ containing $x$ which we can label $ \{ x,y_1 \}, \dots, \{ x, y_n \}$.
		If $y_i$ is in $W$ for all $i$ then $x$ is clearly redundant, contradicting the minimality of $W$.
		\item If $W_1, \dots, W_r$ is the set of minimal vertex covers of $G$, a vertex $x$ cannot belong to  $W_\alpha$ for all $\alpha$.
		This is a direct result of \cite[corollary 1.35]{VT} since $I(G) = \langle W_1 \rangle \cap \cdots \cap \langle W_r \rangle$ and $x \in W_\alpha$ for all $\alpha$ implies $x \in I(G)$ which is impossible.
	\end{enumerate}
\end{remark}

\subsection*{Jets}
Let $R$ be a polynomial ring over a field $k$.
For a positive integer $s$, define the truncation ring $T_s:= k[t]/\langle t^{s+1} \rangle$.
Then a homomorphism $\phi_s: R \longrightarrow T_s$ sends variables of $R$ to degree $s$ polynomials in $T_s$.
Explicitly
\begin{align*}
	\phi_s:
		x_i &\mapsto x\jo{0}_i + x\jo{1}_i t + \cdots + x\jo{s}_i t^s\\
		c &\mapsto c, \ \ \ c \in k
\end{align*}
where the $x\jo{l}_i$ take values in $k$ for $0 < l \leq s$.
Any polynomial $f \in R$ can be considered a function on the variables of $R$; applying $\phi_s$ gives
\begin{equation*}
	\phi_s(f(x_1,\dots,x_n)) = f(x\jo{0}_1 + x\jo{1}_1t + \cdots + x\jo{s}_1t^s, \dots, x\jo{0}_n + x\jo{1}_nt + \cdots + x\jo{s}_nt^s).
\end{equation*}

\begin{example}\label{jets ex}
	Let $R=k[x,y,z]$, $s=2$ and $f= x^2y$.  Then
	\begin{align*}
		\phi_2(f) &= (x\jo{0} + x\jo{1}t + (x\jo{2})t^2)^2(y\jo{0} + y\jo{1}t + y\jo{2}t^2)\\
		&= ((x\jo{0})^2 + 2x\jo{0}x\jo{1}t + (2x\jo{0}x\jo{2} + (x\jo{1})^2)t^2)(y\jo{0} + y\jo{1}t + y\jo{2}t^2)\\
		&=(x\jo{0})^2y\jo{0} + (2x\jo{0}y\jo{0}x\jo{1} + (x\jo{0})^2y\jo{1})t + ((x\jo{0})^2y
		\jo{2} + 2x\jo{0}y\jo{0}x\jo{2} + 2x\jo{0}x\jo{1}y\jo{1} + y\jo{0}(x\jo{1})^2)t^2
	\end{align*}
\end{example}
In \cref{jets ex}, we see clearly that the image of $f$ under $\phi_s$ is a polynomial in $t$ whose coefficients we can treat as polynomials in the variables $x\jo{l}_i$ for $1\leq i \leq n$ and $0 \leq l \leq s$ which exist in their own polynomial ring. Denote by
\begin{equation*}
	\J_s(R) = k[x\jo{0}_1, \dots, x\jo{s}_1, \dots, x\jo{0}_n, \dots, x\jo{s}_n]
\end{equation*}
the polynomial ring in these variables.
So if $R$ is a polynomial ring in $n$ variables, then $\J_s(R)$ is a polynomial ring in $n(s+1)$ variables.
Now if we have an ideal $I=\langle f_1, \dots, f_r \rangle$ of $R$, we can restrict $\phi_s$ to the quotient $R/I$.
Then, as in \cref{jets ex}, $\phi_s$ sends each generator $f_i$ to some polynomial in $t$ and we can write
\begin{equation}\label{alphas}
	\phi_s|_{R/I}(f_i) = \alpha\jo{0}_i + \alpha\jo{1}_i t + \cdots + \alpha\jo{s}_i t^s
\end{equation}
where the $\alpha\jo{l}_i$ are polynomials in $\J_s(R)$ for $0 \leq l \leq s$.
Since $\phi_s$ is a homomorphism, its restriction to $R/I$ must send the generators of $I$ to zero.
Therefore $\alpha\jo{l}_i = 0$ for each $l$, and we have a new set of relations defining an ideal of $\J_s(R)$ which we denote
\begin{equation*}
	\J_s(I) = \langle \alpha\jo{0}_1, \dots, \alpha\jo{s}_1, \dots, \alpha\jo{0}_r, \dots, \alpha\jo{s}_r \rangle.
\end{equation*}
If we consider the truncation ring $T_s$ as a vector space,  the generators of $\J_s(I)$ collected from \cref{alphas} can be broken up into independent components $\alpha\jo{m}$ each corresponding to the basis element $t^m$.
It is easy to see that for sufficiently large $s$ (precisely $s \geq m$), the element $\alpha\jo{m}$ becomes fixed and no longer depends on $s$.
If we take $X$ to be the variety with coordinate ring $R/I$, then the $s$-jets of $X$ form the variety $\J_s(X)$ with coordinate ring $\J_s(R)/\J_s(I)$\cite{GS}.
We refer to $\J_s(R)$ and $\J_s(I)$ as the $s$-jets of $R$ and $I$ respectively, and the set $\{ x\jo{0}_i, x\jo{1}_i, \dots \}$ as the jets variables of $x_i$.
Finally, it follows from definitions that $\J_0(\bullet) \cong \bullet$ for any applicable object.

\begin{example}\label{cord subspace}
	Let $X$ to be the coordinate subspace of $\mathbb{A}^n_k$ defined by the ideal $I=\langle x_{i_1}, \dots, x_{i_r} \rangle \subset k[x_1, \dots x_n]$.
	Then the $s$-jets of $I$ given by $\J_s(I) = \langle x_{i_1}\jo{0}, \dots, x_{i_1}\jo{s}, \dots, x_{i_r}\jo{0}, \dots x_{i_r}\jo{s} \rangle$ define a variety $\J_s(X)=\mathcal{V}(\J_s(I))$ which is itself a coordinate subspace of $\mathbb{A}^{n(s+1)}_k$.
\end{example}

If we take a positive integer $m < s$, there is a natural inclusion $\psi_{m,s}: \J_m(R) \longrightarrow \J_s(R)$ which embeds jets of elements of $R$ into a higher order jets ring.
In terms of the geometry, for an affine variety $X$ we have the canonical projection $\pi_{s,m}^X: \J_s(X) \longrightarrow \J_m(X)$ which projects points in $\mathbb{A}_k^{n(s+1)}$ down to points in $\mathbb{A}_k^{n(m+1)}$ \cite[section 2]{EM}.
To simplify notation, $\pi_s^X : \J_s(X) \longrightarrow \J_0(X) \cong X$ and we can omit the superscript when it is clear from context.
Applying the inclusion $\psi_{m,s}$ to $\alpha\jo{m}$ allows each of these components to exist, in a sense, in every jets ring of higher order.
With this in mind, we can view the $s$- jets of an ideal $I=\langle f_1, \dots, f_r \rangle$ as being built incrementally as a sum of ideals of lower order.
In other words, $\J_s(I) = \J_{s-1}(I) + \langle \alpha\jo{s}_1, \dots, \alpha\jo{s}_r \rangle$ gives a recursive definition of the $s$-jets of $I$ with respect to the image under $\phi_s$ of its generators \cite[section 3]{GS}.

In \cite{GHW} the authors explore the jets of the edge ideals of a graph.
To define the jets of a graph, they make the following observations.
Edge ideals are squarefree monomial ideals.
The jets of a monomial ideal do not, in general, form a monomial ideal.
They do, however, form an ideal whose radical is squarefree and monomial \cite[theorem 3.1]{GS}, and since the base ideal is quadratic, the resulting jets ideal is as well \cite[theorem 2.2]{GHW}.
Therefore the radical of the $s$-jets of an edge ideal is the edge ideal of a graph, which the authors define as the jets of the base graph.
We restate their definition here.

\begin{definition}\cite[section 2]{GHW}
	Let $G$ be a graph with edge ideal $I(G)$.  Then the $s$-jets of $G$, denoted $\J_s(G)$, is the graph defined by the ideal $\sqrt{\J_s(I(G))}$.
\end{definition}

They also give a lemma (\cite[lemma 2.4]{GHW}) describing the edge set of the jets of a graph, which we illustrate in the following example.

\begin{example}\label{jets edges}
	Let $G$ be a graph and $\{ x, y \}$ an edge of $G$.
	Then $xy$ is a generator of $I(G)$ and, as in \cref{alphas}, we can find the corresponding generators of $\J_s(I(G))$:
	\begin{align*}
		\phi_s(xy) = &(x\jo{0} + x\jo{1}t + \cdots + x\jo{s}t^s)(y\jo{0} + y\jo{1}t + \cdots + y\jo{s}t^s)\\
			= &(x\jo{0}y\jo{0})\ \  +\\
			 	&(x\jo{0}y\jo{1} \ + x\jo{1}y\jo{0})t\ \  +\\
				&(x\jo{0}y\jo{2} \ + x\jo{1}y\jo{1} + x\jo{2}x\jo{2})t^2\ \  +\\
				&\vdots\\
				&(x\jo{0}y\jo{s} \ + x\jo{1}y\jo{s-1} + \cdots + x\jo{s}y\jo{0})t^s
	\end{align*}
	Since $\J_s(G)$ is defined by the radical of $\J_s(I(G))$, we apply \cite[theorem 2.1]{GS} to extract the edges of $\J_s(G)$ (see \cref{M2}) which are the terms of each of the coefficient polynomials of the powers of $t$.
\end{example}

We state the lemma for completeness.

\begin{lemma}\cite[lemma 2.4]{GHW}\label{jets edges lemma}
	Let $G$ be a graph and let $x,y$ be distinct vertices of $G$. For every non-negative integer $s$, the set $\{ x\jo{i}, y\jo{j} \}$ is an edge in $\J_s(G)$ if and only if $\{ x, y \}$ is an edge of $G$ and $i + j \leq s$.
\end{lemma}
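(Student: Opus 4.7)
The plan is to identify the minimal monomial generators of $\sqrt{\J_s(I(G))}$ explicitly, and observe that they match exactly the edges described in the statement. Both directions will then fall out of a single computation together with the cited radical theorem from \cite{GS}.

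First I would take an edge $\{x,y\} \in E(G)$ and expand $\phi_s(xy)$, exactly as in \cref{jets edges}. Writing $\phi_s(xy) = \sum_{k=0}^{s} \alpha_{xy}\jo{k} \, t^k$, a direct bookkeeping of how powers of $t$ combine when $x\neq y$ shows
\begin{equation*}
\alpha_{xy}\jo{k} \;=\; \sum_{\substack{i+j=k \\ 0 \le i,j \le s}} x\jo{i} y\jo{j},
\qquad 0 \le k \le s.
\end{equation*}
Ranging over all edges of $G$, the elements $\alpha_{xy}\jo{k}$ generate $\J_s(I(G))$ by definition.

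Next I would invoke the theorem from \cite{GS} (cited in the excerpt just before \cref{jets edges lemma}) which states that the radical of the jets of a squarefree monomial ideal is the squarefree monomial ideal generated by the individual monomial terms of the generators $\alpha_i\jo{k}$. Applying this to our setup, $\sqrt{\J_s(I(G))}$ is generated by the monomials $x\jo{i} y\jo{j}$ with $\{x,y\} \in E(G)$ and $0 \le i+j \le s$. By \cref{graphdefs} the generators of $I(\J_s(G)) = \sqrt{\J_s(I(G))}$ encode exactly the edges of $\J_s(G)$, so $\{x\jo{i}, y\jo{j}\}$ is an edge of $\J_s(G)$ if and only if $x\jo{i} y\jo{j}$ appears in that list.

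Both implications now follow: for the reverse direction, if $\{x,y\} \in E(G)$ and $i+j \le s$, then $x\jo{i}y\jo{j}$ is one of the listed generators; for the forward direction, if $\{x\jo{i}, y\jo{j}\}$ is an edge of $\J_s(G)$, the presence of $x\jo{i}y\jo{j}$ in the generating set of $\sqrt{\J_s(I(G))}$ forces $\{x,y\}$ to be an edge of $G$ and $i+j \le s$, since no other pairs of vertices and no higher-index jets variables appear in any $\alpha_{xy}\jo{k}$ for $k \le s$. The main obstacle is really just the correct invocation of the radical description from \cite{GS}; once that is in hand, the combinatorics is immediate from the expansion of $\phi_s(xy)$.
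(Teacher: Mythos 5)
Your proposal is correct and follows essentially the same route the paper indicates: the paper does not reprove this lemma (it is quoted from \cite{GHW}), but the example immediately preceding it carries out exactly your computation, expanding $\phi_s(xy)$ into the coefficients $\sum_{i+j=k} x\jo{i}y\jo{j}$ and invoking the radical description of jets of squarefree monomial ideals from \cite{GS} to read off the edges of $\J_s(G)$. Your added observation that all resulting generators are quadratic and squarefree (so the listed monomials are precisely the minimal generators, giving the forward implication) is the right way to close the argument and matches the reasoning in the cited source.
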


\Cref{ex:jetsgraphs:fig} gives a visual example of the jets of a path on three vertices.

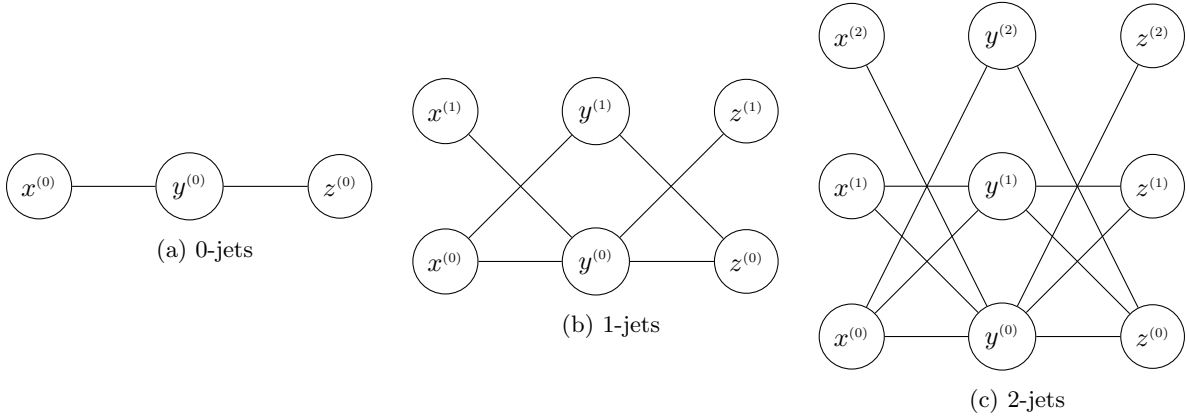
\begin{figure}[h!]
	\centering
	\caption{The jets of the path of length two}
	\label{ex:jetsgraphs:fig}
	\begin{subfigure}{.32\textwidth}
		\centering
		\begin{tikzpicture}[scale=.4, auto]
			\node[circle,draw](x0) at (0,0) {$x\jo{0}$};
			\node[circle,draw](y0) at (5,0) {$y\jo{0}$};
			\node[circle,draw](z0) at (10,0) {$z\jo{0}$};

			\draw[-](x0) to node {}(y0);
			\draw[-](y0) to node {}(z0);
		\end{tikzpicture}\hspace{\fill}
		\caption{$0$-jets}
		\label{ex:jetsgraphs:fig:a}
	\end{subfigure}
	\begin{subfigure}{.32\textwidth}
		\centering
		\begin{tikzpicture}[scale=.4, auto]
			\node[circle,draw](x0) at (0,0) {$x\jo{0}$};
			\node[circle,draw](y0) at (5,0) {$y\jo{0}$};
			\node[circle,draw](z0) at (10,0) {$z\jo{0}$};
			\node[circle,draw](x1) at (0,5) {$x\jo{1}$};
			\node[circle,draw](y1) at (5,5) {$y\jo{1}$};
			\node[circle,draw](z1) at (10,5) {$z\jo{1}$};

			\draw[-](x0) to node {}(y0);
			\draw[-](y0) to node {}(z0);
			\draw[-](x0) to node {}(y1);
			\draw[-](z0) to node {}(y1);
			\draw[-](y0) to node {}(x1);
			\draw[-](y0) to node {}(z1);
		\end{tikzpicture}\hspace{\fill}
		\caption{$1$-jets}
		\label{ex:jetsgraphs:fig:b}
	\end{subfigure}
	\begin{subfigure}{.32\textwidth}
		\centering
		\begin{tikzpicture}[scale=.4, auto]
			\node[circle,draw](x0) at (0,0) {$x\jo{0}$};
			\node[circle,draw](y0) at (5,0) {$y\jo{0}$};
			\node[circle,draw](z0) at (10,0) {$z\jo{0}$};
			\node[circle,draw](x1) at (0,5) {$x\jo{1}$};
			\node[circle,draw](y1) at (5,5) {$y\jo{1}$};
			\node[circle,draw](z1) at (10,5) {$z\jo{1}$};
			\node[circle,draw](x2) at (0,10) {$x\jo{2}$};
			\node[circle,draw](y2) at (5,10) {$y\jo{2}$};
			\node[circle,draw](z2) at (10,10) {$z\jo{2}$};

			\draw[-](x0) to node {}(y0);
			\draw[-](y0) to node {}(z0);
			\draw[-](x0) to node {}(y1);
			\draw[-](z0) to node {}(y1);
			\draw[-](y0) to node {}(x1);
			\draw[-](y0) to node {}(z1);
			\draw[-](y2) to node {}(x0);
			\draw[-](y2) to node {}(z0);
			\draw[-](y0) to node {}(x2);
			\draw[-](y0) to node {}(z2);
			\draw[-](y1) to node {}(z1);
			\draw[-](x1) to node {}(y1);
		\end{tikzpicture}\hspace{\fill}
		\caption{$2$-jets}
		\label{ex:jetsgraphs:fig:b}
	\end{subfigure}
\end{figure}

\section{Jets from a Vertex Cover}
In this section we will discuss ideal quotients and refer to the following definition:

\begin{definition}\cite[definition 4.4.5]{CLO}\label{def colon}
Let $I$, $J$ be ideals in a polynomial ring $R$.  Then the ideal quotient and saturation of $I$ with respect to $J$ are, respectively
\begin{enumerate}
	\item $I:J = \langle f \in R \ | \ fg \in I \text{ for all } g \in J \rangle$
	\item $I:J^{\infty} = \langle f \in R \ | \ \text{ for all } g \in J \text{ there is an integer } N \geq 0 \text{ such that } fg^N \in I  \rangle$
\end{enumerate}
\end{definition}

If $J$ is principal, we can view the quotient of $I$ with $J$ as the quotient of $I$ with the polynomial $f \in R$ that generates $J$.
To get an idea of what this operation does, we can think of it as a method to ``peel off'' factors from elements of an ideal.
So if $f$ divides $h=fg \in I$ then $g$ will appear in the quotient; if $f$ does not divide $h$, then certainly $fh \in I$ and $h$ appears in the quotient unaffected.
As a consequence of this, if $f=f1_R \in I$ then $1_R$ is in the quotient and we have $I:f = R$.
One property of the ideal quotient which will be applied in this section is illustrated in the following remark.

\begin{remark}\cite[exercise 1.12]{AM}\label{colon ass}
	For and ideal $I \subseteq R$ and polynomials $a,b \in R$,
	\begin{align*}
		(I:a):b = \{ g \in R \ | \ gb \in \{f \in R \ | \ fa \in I \} \} = \{ g \in R \ | \ gab \in I \} = I:ab
	\end{align*}
\end{remark}

Let $G$ be a graph with vertices in a polynomial ring $R$ and edge ideal $I(G)$ and let $W$ be a minimal vertex cover of $G$.
For any non-negative integer $s$, denote by $\J_s(W)$ the subset of $\J_s(R)$ given by $\{ x_i^{(j)} \ | \ 0 \leq j \leq s \text{ and } x_i \in W \}$.
Then $\J_s(W)$ is a minimal vertex cover of $\J_s(G)$ \cite[proposition 5.3]{GHW}.
Fix an element $x \in W$.
Since $W$ is minimal, we can find $y \in W^C$ such that $\{ x, y \}$ is an edge of $G$, and their product $xy$ is therefor an element of $I(G)$.

\begin{lemma}\label{colon lemma}
	Let $G$ be a graph and fix $W$, a minimal vertex cover of $G$.  Given any edge $\{ x,y \} $ of $G$ with $x \in W$ and $y \in W^C$, $x\jo{i}$ is an element of the quotient $\J_s(I(G)):(y\jo{0})^{s+1}$ for all $i \leq s$.
\end{lemma}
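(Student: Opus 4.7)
The plan is to prove, by induction on $i$, the stronger statement that $x\jo{i}(y\jo{0})^{i+1} \in \J_s(I(G))$ for every $0 \leq i \leq s$. Once this is in hand, multiplying by $(y\jo{0})^{s-i}$ immediately yields $x\jo{i}(y\jo{0})^{s+1} \in \J_s(I(G))$, which is precisely what it means for $x\jo{i}$ to lie in the quotient $\J_s(I(G)):(y\jo{0})^{s+1}$.

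To set up the induction, I would first record the generators of $\J_s(I(G))$ arising from the edge $\{x,y\}$. Since $xy\in I(G)$, applying $\phi_s$ produces the generators
\[
\alpha\jo{k} \;=\; \sum_{j=0}^{k} x\jo{j}y\jo{k-j}, \qquad 0 \leq k \leq s,
\]
each of which lies in $\J_s(I(G))$. The base case $i=0$ is immediate, since $\alpha\jo{0}=x\jo{0}y\jo{0}$ is already of the required form.

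For the inductive step, multiply $\alpha\jo{i}$ by $(y\jo{0})^i$ and peel off the unique term in which $x\jo{i}$ appears:
\[
x\jo{i}(y\jo{0})^{i+1} \;=\; (y\jo{0})^i\alpha\jo{i} \;-\; \sum_{j=0}^{i-1} x\jo{j}\,y\jo{i-j}\,(y\jo{0})^i.
\]
The key observation is that for each $j<i$ we can factor $(y\jo{0})^i = (y\jo{0})^{j+1}\cdot(y\jo{0})^{i-j-1}$ (the exponent $i-j-1$ is non-negative precisely because $j<i$), so each summand contains a factor $x\jo{j}(y\jo{0})^{j+1}$, which belongs to $\J_s(I(G))$ by the inductive hypothesis. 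Hence the entire right-hand side lies in $\J_s(I(G))$, closing the induction.

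The main obstacle I anticipate is calibrating the strength of the induction hypothesis. A naive attempt to prove $x\jo{i}(y\jo{0})^{s+1}\in\J_s(I(G))$ directly fails: when one isolates the $x\jo{i}$ term from $(y\jo{0})^s\alpha\jo{i}$, the remaining summands carry factors $y\jo{k}$ with $k \geq 1$ instead of additional copies of $y\jo{0}$, so the induction hypothesis does not match. Tightening the exponent to $(y\jo{0})^{i+1}$ is exactly what lets the $(y\jo{0})^{i-j-1}$ factors absorb correctly, and once the strengthened statement holds for all $i\leq s$, raising the $y\jo{0}$-exponent to $s+1$ is automatic.
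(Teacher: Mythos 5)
Your proof is correct. The algebraic heart is the same as the paper's---multiply the jet relation $\sum_{j} x\jo{j}y\jo{k-j}$ coming from the edge $\{x,y\}$ by a power of $y\jo{0}$ and peel off the term containing the top-indexed $x$, absorbing the remaining terms by an inductive hypothesis---but you organize the induction differently. The paper inducts on the jet order $s$: its hypothesis is $x\jo{i}\in\J_{s-1}(I(G)):(y\jo{0})^{s}$ for all $i\le s-1$, its inductive step uses only the order-$s$ generator and explicitly produces only $x\jo{s}$, and the indices $i<s$ are then inherited from the lower-order jets via the inclusion of $\J_{s-1}(I(G))$ into $\J_s(I(G))$, a point left largely implicit. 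You instead fix $s$ and induct on the index $i$, proving the sharper membership $x\jo{i}(y\jo{0})^{i+1}\in\J_s(I(G))$. This buys two things: the argument stays inside a single jets ring (no appeal to the embedding of lower-order jets is needed, since the generators $\alpha\jo{k}$ for $k\le s$ are already present in $\J_s(I(G))$), and all indices $i\le s$ are handled uniformly by one induction, after which multiplying by $(y\jo{0})^{s-i}$ gives the stated quotient membership; your diagnosis that the exponent must be calibrated to $i+1$ for the induction to close is exactly right. As a side benefit, the strengthened form is closer to what \cref{vc colon} actually uses later (the paper's phrase there that ``$x\jo{i}(y\jo{0})^{s-1}$ is a monomial in $\J_s(I(G))$'' appears to intend a membership of precisely this kind), so your variant dovetails cleanly with the lemma's subsequent application.
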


\begin{proof}
	We prove the claim by induction on the order of jets.
	The base case is evident from the fact that $\J_0(I(G))$ and $I(G)$ are isomorphic as rings.
	Consider the following element of $\J_s(I(G))$ constructed from its $s-$order generator (see \cref{jets edges}):
	\begin{equation}\label{pdcolon}
		(y\jo{0})^s \cdot (x\jo{0}y\jo{s} + x\jo{1}y\jo{s-1} + \cdots + x\jo{s}y\jo{0}).
	\end{equation}
	Assuming $x\jo{i} \in \J_{s-1}(I(G)):(y\jo{0})^s$ for all $0 \leq i \leq s-1$, we have $x\jo{i}(y\jo{0})^s \in \J_{s-1}(I(G))$ for all such $i$.
	Since the image of $\J_{s-1}(I(G))$ under inclusion is contained in $\J_s(I(G))$ (\cref{bg}) each term of \cref{pdcolon} with the exception of the last exists as a monomial in $\J_s(I(G))$ implying that the sum
	\begin{equation}\label{sumcolon}
		\sum^{s-1}_{i=0} x\jo{i}y\jo{s-1-i}(y\jo{0})^s
	\end{equation}
	is also an element of $\J_s(I(G))$.
	Taking the difference of \cref{pdcolon} and \cref{sumcolon} we find $x\jo{s}(y\jo{0})^{s+1} \in \J_s(I(G))$ implying $x\jo{s}$ is an element of $\J_s(I(G)):(y\jo{0})^{s+1}$.
\end{proof}

\begin{remark}\label{colon lemma rem}
	More generally, for any monomial $f \in \J_s(R)$ with $(y\jo{0})^{s+1}$ as factor, $x\jo{s}f \in \J_s(I(G))$ (see \cref{colon ass}).
	We use this fact to find an expression for the ideal generated by the $s$-jets of a vertex cover.
\end{remark}

\begin{theorem}\label{vc colon}
	For any vertex cover $W$ of a graph $G$,
	\begin{align*}
		\langle \J_s(W) \rangle = \J_s(I(G)):f^{\infty}
	\end{align*}
	where $f$ is the product of the elements of $\J_0(W^C)$.
\end{theorem}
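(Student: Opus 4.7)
The plan is to prove equality by two inclusions, exploiting the fact that $\langle \J_s(W) \rangle$ is a prime ideal that contains $\J_s(I(G))$ but does not contain $f$. Three preliminary observations drive everything. First, $\langle \J_s(W) \rangle$ is generated by a subset of the variables of $\J_s(R)$, hence is prime. Second, $f$ is a product of variables $y\jo{0}$ with $y \in W^C$, none of which appear in $\J_s(W)$, so $f \notin \langle \J_s(W) \rangle$. Third, $\J_s(I(G)) \subseteq \langle \J_s(W) \rangle$: every edge $\{x,y\}$ of $G$ has at least one endpoint, say $x$, in $W$, and every coefficient in the expansion of $\phi_s(xy)$ is a sum of monomials each containing some $x\jo{i} \in \J_s(W)$.

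The containment $\J_s(I(G)):f^{\infty} \subseteq \langle \J_s(W) \rangle$ is then immediate: if $g f^N \in \J_s(I(G)) \subseteq \langle \J_s(W) \rangle$ for some $N$, then primality of $\langle \J_s(W) \rangle$ together with $f^N \notin \langle \J_s(W) \rangle$ forces $g \in \langle \J_s(W) \rangle$.

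For the reverse containment, it suffices to show that each generator $x\jo{i}$ of $\langle \J_s(W) \rangle$ lies in $\J_s(I(G)):f^{\infty}$. Assuming $W$ is minimal (the context inherited from \cref{colon lemma}; see \cref{vertex covers remark}(1)), for every $x \in W$ there is some $y \in W^C$ with $\{x,y\} \in E(G)$. \Cref{colon lemma} then yields $x\jo{i}(y\jo{0})^{s+1} \in \J_s(I(G))$ for all $0 \leq i \leq s$. Writing $f = y\jo{0} \cdot h$ where $h$ is the product of the remaining $(y')\jo{0}$ with $y' \in W^C \setminus \{y\}$, multiplying through by $h^{s+1}$ and invoking \cref{colon ass} shows that $x\jo{i} f^{s+1} \in \J_s(I(G))$, so $x\jo{i} \in \J_s(I(G)):f^{\infty}$.

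The main obstacle is not computational but structural: for each generator $x\jo{i}$ we must locate a witness vertex $y \in W^C$ adjacent to $x$, and this is precisely where minimality of $W$ is essential, since for a non-minimal cover a vertex $x \in W$ might have all neighbors inside $W$ and \cref{colon lemma} would fail to apply. Once the witness is fixed, the argument reduces to the standard identity $(I:a):b = I:ab$ and a tracking of powers.
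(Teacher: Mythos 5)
Your argument is correct, and for one of the two inclusions it takes a genuinely different route from the paper. For the containment $\langle \J_s(W) \rangle \subseteq \J_s(I(G)):f^{\infty}$ you do essentially what the paper does: invoke \cref{colon lemma} to get $x\jo{i}(y\jo{0})^{s+1} \in \J_s(I(G))$ for a witness $y \in W^C$ adjacent to $x$, then absorb the remaining factors of $f$ to land in $\J_s(I(G)):f^{s+1}$ (your appeal to \cref{colon ass} is fine, though simple absorption of $h^{s+1}$ into the ideal already suffices). For the opposite containment, however, the paper works at the level of finite colon powers: it establishes $\langle \J_s(W) \rangle = \J_s(I(G)):f^{s+1}$, then argues that coloning once more by $f$ does not change $\langle \J_s(W) \rangle$, so $\J_s(I(G)):f^{s+1} = \J_s(I(G)):f^{s+2}$, and concludes by the stabilization criterion for saturation. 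You instead work directly with the saturation: $\langle \J_s(W) \rangle$ is a prime generated by variables, $\J_s(I(G)) \subseteq \langle \J_s(W) \rangle$ because every generator of $\J_s(I(G))$ is a sum of monomials each divisible by some $x\jo{i}$ with $x \in W$, and $f^N \notin \langle \J_s(W) \rangle$ since $f$ is a product of variables $y\jo{0}$ with $y \in W^C$; primality then forces any $g$ with $gf^N \in \J_s(I(G))$ into $\langle \J_s(W) \rangle$. Your version buys a cleaner and more robust proof of this direction (no bookkeeping of the exact power $s+1$ and no stabilization argument), at the cost of not exhibiting the explicit identity $\langle \J_s(W) \rangle = \J_s(I(G)):f^{s+1}$, which the paper's proof records along the way. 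You are also right to flag minimality of $W$: the reverse containment genuinely needs a neighbor of $x$ outside $W$ (for a single edge $\{x,y\}$ with the non-minimal cover $W=\{x,y\}$ one has $f=1$ and the saturation is just $\J_s(I(G)) \subsetneq \langle \J_s(W)\rangle$), and the paper's proof uses minimality in exactly the same place, even though the theorem's phrasing says only ``vertex cover.''
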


\begin{proof}
	Let $f$ be as stated in the claim.  Then for any $0 \leq i \leq s$, \cref{colon lemma} guarantees $x\jo{i} \in \J_s(I(G)):f^{s+1}$ for all $x \in W$.
	Therefor $\J_s(I(G)):f^{s+1} \subseteq \langle \J_s(W)\rangle$.
	For the opposite containment take an arbitrary generator $x\jo{i}$ of $\langle \J_s(W) \rangle$.
	From the proof of \cref{colon lemma}, $x\jo{i}(y\jo{0})^{s-1}$ is a monomial in $\J_s(I(G))$ for any $i \leq s$.
	By construction, $y^{s-i}$ divides $f^{s+1}$ and \cref{colon lemma rem} leads to the conclusion that $x\jo{i} \in \J_s(I(G)):y^{s-i} \subseteq \J_s(I(G)):f^{s+1}$.
	We conclude that $\langle \J_s(W) \rangle = \J_s(I(G)):f^{s+1}$, which implies $\langle \J_s(W):f \rangle = \J_s(I(G)):f^{s+2}$.
	But by construction $x\jo{i}$ does not divide $f$ for any $x\jo{i} \in \J_s(W)$ so the colon does not change the ideal of the vertex cover, and $\J_s(I(G)):f^{s+1} = \J_s(I(G)):f^{s+2}$ which is a sufficient condition for saturation \cite[proposition 4.4.9 (ii)]{CLO}.
\end{proof}

These ideals generated by the elements of a minimal vertex cover appear in the primary decomposition of the edge ideal of a graph.
In general, given an ideal $I$ of a polynomial ring, we can decompose it into an intersection of primary ideals.
All of these primary components corresponding to minimal primes of $I$ are uniquely determined \cite[corollary 4.11]{AM}, and can be recovered from a given minimal prime using localization.
This is known for Modules in general and is discussed in \cite{SLA, MCRT} among others.
We could not find a reference stating this result explicitly for ideals of polynomial rings, so we present it here as a lemma.
We will use the notation $R_\p$ for the localization of the ring $R$ at the prime ideal $\p$, $\phi_\p$ for the homomorphism from $R$ to $R_\p$ defined by $r \xmapsto{\phi_p} \frac{r}{1_R}$, and $IR_\p$ for the image of $I$ under $\phi_\p$.

\begin{lemma}\label{q from p}
	Let $R=k[x_1,\dots,x_n]$ and $I \subseteq R$ be an ideal with associated prime $\p \supseteq I$.
	If $\p$ is minimal, it corresponds to a primary component $\q$ of $I$ with $\q = \phi_\p^{-1}(IR_\p)$.
\end{lemma}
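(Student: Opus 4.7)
The plan is to produce a minimal primary decomposition of $I$ and then show that localization at $\mathfrak{p}$ isolates exactly the component we want. Since $R$ is Noetherian, the Lasker--Noether theorem gives a decomposition $I = \mathfrak{q}_1 \cap \cdots \cap \mathfrak{q}_r$ where each $\mathfrak{q}_i$ is $\mathfrak{p}_i$-primary and the $\mathfrak{p}_i$ are exactly the associated primes of $I$. Relabeling, we may assume $\mathfrak{p} = \mathfrak{p}_1$, and by \cite[corollary 4.11]{AM} the primary component $\mathfrak{q}_1$ is uniquely determined since $\mathfrak{p}_1$ is minimal. The goal is to prove $\mathfrak{q}_1 = \phi_\mathfrak{p}^{-1}(IR_\mathfrak{p})$.

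First I would exploit minimality of $\mathfrak{p}$ to show that the other primary components disappear under localization. For each $j \neq 1$, minimality of $\mathfrak{p}$ implies $\mathfrak{p}_j \not\subseteq \mathfrak{p}$, so I pick $g \in \mathfrak{p}_j \setminus \mathfrak{p}$. Some power $g^N$ lies in $\mathfrak{q}_j$ because $\mathfrak{p}_j = \sqrt{\mathfrak{q}_j}$, and $g^N$ maps to a unit in $R_\mathfrak{p}$ since $g \notin \mathfrak{p}$. Therefore $\mathfrak{q}_j R_\mathfrak{p} = R_\mathfrak{p}$. Since localization commutes with finite intersection, it follows that
\begin{equation*}
    IR_\mathfrak{p} = \bigcap_{i=1}^r \mathfrak{q}_i R_\mathfrak{p} = \mathfrak{q}_1 R_\mathfrak{p}.
\end{equation*}

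Next I would prove that contracting this back to $R$ recovers $\mathfrak{q}_1$, i.e.\ $\phi_\mathfrak{p}^{-1}(\mathfrak{q}_1 R_\mathfrak{p}) = \mathfrak{q}_1$. The inclusion $\mathfrak{q}_1 \subseteq \phi_\mathfrak{p}^{-1}(\mathfrak{q}_1 R_\mathfrak{p})$ is immediate. For the reverse, take $f \in R$ with $\phi_\mathfrak{p}(f) \in \mathfrak{q}_1 R_\mathfrak{p}$. Then $f/1 = q/s$ in $R_\mathfrak{p}$ for some $q \in \mathfrak{q}_1$ and $s \in R \setminus \mathfrak{p}$, so there exists $u \in R \setminus \mathfrak{p}$ with $u(sf - q) = 0$ in $R$. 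Rearranging gives $(us)f = uq \in \mathfrak{q}_1$. Since $us \notin \mathfrak{p} = \sqrt{\mathfrak{q}_1}$, no power of $us$ belongs to $\mathfrak{q}_1$, so the defining property of a $\mathfrak{p}$-primary ideal forces $f \in \mathfrak{q}_1$. Combining the two steps gives $\phi_\mathfrak{p}^{-1}(IR_\mathfrak{p}) = \phi_\mathfrak{p}^{-1}(\mathfrak{q}_1 R_\mathfrak{p}) = \mathfrak{q}_1$, and we set $\mathfrak{q} := \mathfrak{q}_1$.

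The argument is standard commutative algebra; the only delicate point is the final contraction step, where one must use the primary condition (rather than just primeness) to conclude $f \in \mathfrak{q}_1$ from $(us)f \in \mathfrak{q}_1$ with $us \notin \sqrt{\mathfrak{q}_1}$. Everything else is bookkeeping about how localization interacts with intersections and with elements outside the prime of localization.
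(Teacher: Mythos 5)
Your proof is correct, and while it rests on the same two pillars as the paper's argument (minimality of $\p$ to get elements outside $\p$ from the other components, and the primary property of $\q$ for the contraction step), it is organized along a genuinely different route. The paper never computes $IR_\p$: setting $I^* = \bigcap_{i\geq 2}\q_i$, it finds a single $g \in I^*\setminus\p$ (its existence by contradiction, since $I^*\subseteq\p$ would force some $\q_i\subseteq\p$ and hence $\p_i\subseteq\p$, against minimality), so that $f\in\q$ gives $fg\in I$ and $f/1 = (1/g)(fg/1)\in IR_\p$; the reverse containment contracts $IR_\p$ directly, using $fg=h\in I\subseteq\q$ plus primariness. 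You instead first establish the stronger identity $IR_\p = \q R_\p$, by noting each other component $\q_j$ becomes the unit ideal in $R_\p$ (a power of some $g\in\p_j\setminus\p$ lies in $\q_j$ and is a unit after localizing) and that localization commutes with finite intersections, and only then contract $\q R_\p$. What your version buys: it isolates the clean intermediate fact $IR_\p=\q R_\p$, and your treatment of $f/1=q/s$ via $u(sf-q)=0$ with $u\notin\p$ is valid over any Noetherian ring, whereas the paper's step ``$f/1 = h/g$ implies $h=fg$'' silently uses that the ambient ring is a domain. What the paper's version buys: it needs no appeal to the compatibility of localization with intersections, getting by with one auxiliary element and elementary manipulations. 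The difference is packaging rather than substance, and your write-up has no gaps.
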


\begin{proof}
	Let $R$ and $I$ be as above and write the minimal primary decomposition $I = \q_1 \cap \cdots \cap \q_r$ with $\sqrt{I} = \p_1 \cap \cdots \cap \p_r$ where $\p_i = \sqrt{\q_i}$.
	We can arrange this decomposition so that $\p_1$ is minimal, and set $\p = \p_1$ and $\q = \q_1$ which forces $\q \subseteq \p$.
	\\
	Take the intersection of the remaining primary components and denote $I^* = \bigcap_{i=2}^r \q_i$ so that $I = \q \cap I^*$.
	Let $f \in \q$ and $g \in I^*$.
	Then their product $fg$ is in $I$ and $\phi_\p(fg)=\frac{fg}{1_R} \in IR_\p$.
	If we choose $g$ such that $g \notin \p$, then $\frac{1}{g}$ is an element of $R_\p$ and we have $\frac{f}{1_R} \in IR_\p$ which implies $f \in \phi_\p^{-1}(IR_\p)$ (provided such a $g$ exists).
	Assume for the sake of contradiction that there is no such $g$.
	Then $I^* \subseteq \p$ and $\q_i \subseteq \p$ for some $i$\cite[proposition 1.11]{AM}, so $\sqrt{\q_i} \subseteq \sqrt{\p} = \p$, which contradicts the minimality of $\p$.
	Therefore $\q \subseteq \phi_\p^{-1}(IR_\p)$.\\
	\\
	For the opposite containment, take $f \in \phi_\p^{-1}(IR_\p)$.
	Then $\phi_\p(f) \in IR_\p$ and we have the relation $\frac{f}{1} = \frac{h}{g}$ where $h \in I$ and $g \notin \p$, implying $h = fg$.
	Since $fg$ is an element of $I$, it is also an element of the $\p$-primary ideal $\q$, which contains no power of $g$ (as $g \notin \p$) and must therefor contain $f$.
	We conclude that $\phi_\p^{-1}(IR_\p) \subseteq \q$ and the lemma is proved.
\end{proof}

Using \cref{q from p} we can describe some of the components of the decomposition of the jets of an edge ideal.

\begin{theorem}\label{vc ideal prop}
	Let $G$ be a graph and $W$ a minimal vertex cover of $G$.  Then $\langle \J_s(W) \rangle$ is both a minimal prime and a primary component of $\J_s(I(G))$.
\end{theorem}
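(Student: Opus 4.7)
The plan is to split the theorem into two claims. First, I would show that $\p := \langle \J_s(W)\rangle$ is a minimal prime of $\J_s(I(G))$. Second, I would invoke \cref{q from p} to describe the primary component of $\J_s(I(G))$ associated to $\p$ and prove that it coincides with $\p$ itself.

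For the minimal prime claim, observe that $\p$ is generated by a subset of the variables of $\J_s(R)$ and is therefore prime. The containment $\J_s(I(G)) \subseteq \p$ follows because every generator of $\J_s(I(G))$ has the form $\sum_{i+j=k} x\jo{i} y\jo{j}$ for some edge $\{x,y\}$ of $G$; since $W$ is a vertex cover, at least one endpoint lies in $W$ and contributes a factor in $\J_s(W)$ to every term of the sum. Minimality then follows from \cite[proposition 5.3]{GHW}, which asserts that $\J_s(W)$ is a minimal vertex cover of $\J_s(G)$, combined with the standard correspondence between minimal vertex covers of a graph and minimal primes of its edge ideal together with the fact that the minimal primes of $\J_s(I(G))$ coincide with those of its radical $I(\J_s(G))$.

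For the primary component claim, \cref{q from p} identifies the $\p$-primary component of $\J_s(I(G))$ as $\q = \phi_\p^{-1}(\J_s(I(G)) R_\p)$, so it suffices to show $\q = \p$. The inclusion $\q \subseteq \p$ is immediate: $\J_s(I(G)) \subseteq \p$ gives $\J_s(I(G)) R_\p \subseteq \p R_\p$, and $\phi_\p^{-1}(\p R_\p) = \p$ since $\p$ is prime. For the reverse inclusion, fix a generator $x\jo{i}$ of $\p$ with $x \in W$ and $0 \leq i \leq s$. By the minimality of $W$ (\cref{vertex covers remark}(1)) there exists $y \in W^C$ with $\{x,y\}$ an edge of $G$, and \cref{colon lemma} supplies $x\jo{i}(y\jo{0})^{s+1} \in \J_s(I(G))$. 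Since $y \in W^C$ we have $y\jo{0} \notin \p$, so $y\jo{0}$ is a unit in $R_\p$; consequently $x\jo{i}$ lies in $\J_s(I(G)) R_\p$ and therefore in $\q$.

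The main obstacle is mostly bookkeeping: I would need to move elements carefully across $\phi_\p$ and confirm that the clearing witness $(y\jo{0})^{s+1}$ supplied by \cref{colon lemma} is genuinely outside $\p$. The substantive mathematics is already encoded in \cref{colon lemma} and \cref{q from p}; what remains is to observe that the minimality of $W$ provides, for each generator of $\p$, a factor outside $\p$ that clears it into $\J_s(I(G))$, which is exactly the content of \cref{vertex covers remark}(1).
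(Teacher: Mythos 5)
Your proposal is correct, and its second half is essentially the paper's argument: both prove minimality of $\p = \langle \J_s(W)\rangle$ and then identify the $\p$-primary component via \cref{q from p}, showing $\p \subseteq \q$ by producing, for each generator $x\jo{i}$, a clearing factor outside $\p$ that is inverted in the localization. Where you diverge is the minimal-prime step: the paper argues by hand, showing $\sqrt{\J_s(I(G))} = I(\J_s(G)) \subseteq \p$ and then ruling out any intermediate prime $\p'$ by using \cref{vertex covers remark} to force every generator of $\p$ into $\p'$, whereas you outsource this to \cite[proposition 5.3]{GHW} (that $\J_s(W)$ is a \emph{minimal} vertex cover of $\J_s(G)$) together with the correspondence of \cite[corollary 1.35]{VT} between minimal vertex covers and minimal primes of an edge ideal, plus the fact that an ideal and its radical share minimal primes. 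Your route is shorter and leans on cited results the paper already uses elsewhere; the paper's is self-contained at the level of prime containments. One genuine improvement on your side: in the localization step the paper asserts a monomial $x\jo{i}y\jo{j} \in \J_s(I(G))$ with $y\jo{j} \notin \p$, but for $i+j \geq 1$ such a monomial is a priori only a generator of the radical, since the actual generators of $\J_s(I(G))$ are the sums $\sum_{i+j=k} x\jo{i}y\jo{j}$; your use of the witness $x\jo{i}(y\jo{0})^{s+1} \in \J_s(I(G))$ supplied by \cref{colon lemma}, with $y\jo{0} \notin \p$ because $y \in W^C$, makes that step airtight without any extra work.
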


\begin{proof}
	Let $W= \{ x_1, \dots, x_n \}$ be a minimal vertex cover of a graph $G$.
	Then
	\begin{equation*}
		\p = \langle \J_s(W) \rangle = \langle x\jo{0}_1, \dots, x\jo{0}_n, \dots, x\jo{s}_1, \dots, x\jo{s}_n \rangle
	\end{equation*}
	is clearly prime.\\
	\\
	For the first claim, to show $\p$ is a minimal prime of $\J_s(I(G))$, it is sufficient to show that it is a minimal prime of $\sqrt{\J_s(I(G))}$.
	Since $I(G)$ is a square free monomial ideal, \cite[theorem 2.1]{GS} guarantees that $\sqrt{\J_s(I(G))}$ is also a monomial ideal with a generating set obtained by collecting all of the terms of the generators of $\J_s(I(G))$.
	Furthermore, since $I(G)$ is the edge ideal of a graph, we can describe this generating set:
	\begin{align*}
		\sqrt{\J_s(I(G))} = \langle x\jo{i}y\jo{j} \ | \ i+j \leq s \text{ and } xy \in I \rangle
	\end{align*}
	where the $x\jo{i} \in \J_s(W)$ are generators of $\p$ \cite[lemma 2.4]{GHW}.
	Since $W$ is a vertex cover of $G$, $\J_s(W)$ is a vertex cover of $\J_s(G)$ \cite[proposition 5.3]{GHW} and any $x\jo{i}y\jo{j}$ in its edge ideal is also an element of the ideal $\p$.
	We therefore have the containment $I( \J_s(G)) = \sqrt{\J_s(I(G))} \subseteq \p$.
	Now assume there exists some prime ideal $\p'$ with $\sqrt{\J_s(I(G))} \subseteq \p' \subseteq \p$.
	Given an arbitrary generator $xy$ of $\sqrt{\J_s(I(G))}$ we have $xy \in \p$ and $xy \in \p'$, where at least one of $x$ or $y$ is an element of $\p$.
	In the case that only one (say $x$) is in $\p$ then by containment and the primality of $\p'$ we must have $x$ in $\p'$.
	If both $x$ and $y$ are generators of $\p$ then the minimality of $W$ guarantees the existence of elements $x'$ and $y'$ such that $xx', yy' \in \J_s(I(G))$ and $x', y' \notin \p$.
	Then by the previous argument both $x$ and $y$ must be elements of $\p'$.
	Therefor $\p \subseteq \p'$ and $\p$ is a minimal prime of both $\sqrt{\J_s(I(G))}$ and $\J_s(I(G))$.\\
	\\
	For the second claim, since $\p$ is minimal, it corresponds to a unique primary ideal $\q$ with $\sqrt{\q} = \p$ and $\q$ a primary component of $\J_s(I)$\cite[corollary 4.11]{AM}.
	As in \cref{q from p}, we can construct the local ring $\J_s(R)_\p$ and the corresponding homomorphism $\phi_\p$.
	Let $x\jo{i} \in \p$.
	Since $\p$ originates from a minimal vertex cover of a graph, we can find an element $y\jo{j} \in \J_s(R)$ such that $x\jo{i}y\jo{j} \in \J_s(I(G))$ with $y\jo{j} \notin \p$.
	Then $\phi_\p(x\jo{i}y\jo{j}) = \frac{x\jo{i}y\jo{j}}{1}$ is in the extension $\J_s(I(G))\J_s(R)_\p$ which implies $\frac{x\jo{i}}{1} = \frac{1}{y\jo{j}} (\frac{x\jo{i}y\jo{j}}{1})$ is as well.
	Therefore $x\jo{i} \in \q$ by \cref{q from p}, and $\p = \langle \J_s(W) \rangle \subseteq \q$.
	Naturally $\q \subseteq \p$, and we have equality, showing that $\langle \J_s(W) \rangle$ is a primary component of $\J_s(I(G)).$
\end{proof}

\section{The principal component of the edge ideal of a graph}
Having a description of the variety associated to the jets of a graph, it seems natural to examine its irreducible components.
In \cite{KS}, K\u{o}sir and Sethuraman study the jets of determinantal varieties of an $m \times n$ matrix with entries in an algebraically closed field $k$ by mapping the entries of $M$ into the truncation ring $k[t]/ \langle t^{s+1} \rangle$ (\cref{bg}).
One particular component of such a variety (if it turns out to be reducible) is referred to as the \emph{principal component} and can be described as the ``closure of the set of jets supported over the smooth points of the base [variety]''\cite{GJS}.
We can extend this description to the variety defined by the edge ideal of a graph.

\begin{definition}[Principal Component]\label{principal}
	Let $G$ be a graph in $n$ vertices with edge ideal $I(G)$.  Let $X = \mathcal{V}(I(G))$.  Then the $s$-order principal component of $G$ is the Zariski closure of the $s$-jets of the smooth locus of $X$.
\end{definition}

We note that, in this case, the name \emph{principal component} can be misleading since, as it turns out, when applied to the edge ideal of a graph, we get variety that is not irreducible in general.
With $X$ as described in \cref{principal}, denote by $X_{smooth}$ and $X_{sing}$ the smooth and singular loci of $X$ respectively.
Since $X$ is a variety corresponding to a squarefree quadratic monomial ideal, it is simply a union of coordinate subspaces of $\mathbb{A}^n$ and we can easily describe its singular points.

\begin{proposition}\label{open components}
	Let $G$ be a graph in $n$ vertices, with corresponding variety $X= \mathcal{V}(I(G))$.
	Then a point $p \in X$ is singular if and only if it lies on the intersection of two or more irreducible components of $X$.
\end{proposition}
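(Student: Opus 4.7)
The plan is to leverage the fact that $X$ decomposes as a finite union $V_1 \cup \cdots \cup V_t$ of coordinate subspaces of $\mathbb{A}^n_k$, each of which is itself smooth as an affine variety, and to prove each direction of the equivalence by contrapositive.

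First, I would show that if $p \in X$ lies on only one irreducible component, say $V_1$, then $p$ is a smooth point of $X$. Since each $V_j$ with $j \geq 2$ is Zariski closed and $p \notin V_2 \cup \cdots \cup V_t$, the complement $U = \mathbb{A}^n_k \setminus (V_2 \cup \cdots \cup V_t)$ is an open neighborhood of $p$ on which $X$ coincides with $V_1$. Hence the local ring $\mathcal{O}_{X,p}$ agrees with $\mathcal{O}_{V_1,p}$, and since $V_1$ is a linear subspace of $\mathbb{A}^n_k$ (hence smooth everywhere), $p$ is smooth in $V_1$ and therefore in $X$.

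For the converse, suppose $p$ lies on two distinct components $V_i$ and $V_j$. Writing $I(X) = \p_1 \cap \cdots \cap \p_t$ with $\p_\ell$ the prime ideal of $V_\ell$, this intersection passes to the local ring $\mathcal{O}_{X,p}$, which then has at least two distinct minimal primes, one contributed by each component passing through $p$. Thus $\mathcal{O}_{X,p}$ fails to be an integral domain, and since every regular local ring is a domain, $\mathcal{O}_{X,p}$ cannot be regular; that is, $p$ is singular in $X$.

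I do not anticipate any serious obstacle: both directions reduce to a local analysis resting on the facts that coordinate subspaces are smooth and that regular local rings are integral domains. The only point that requires some care is the localization argument in the first direction, which amounts to observing that the complement of the other components provides an open neighborhood on which $X$ and $V_1$ literally agree, so smoothness can be checked on $V_1$ alone.
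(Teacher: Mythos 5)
Your argument is correct, but it proceeds quite differently from the paper. For the direction ``two or more components implies singular,'' the paper simply cites a result from Cox--Little--O'Shea (their Theorem 9.6.8), whereas you essentially reprove it: localizing $I(X)=\p_1\cap\cdots\cap\p_t$ at the maximal ideal of $p$ leaves at least two distinct minimal primes when $p$ lies on two components, so $\mathcal{O}_{X,p}$ is not a domain and hence not regular. For the converse, the paper stays with the tangent-space definition of smoothness and computes explicitly: writing the component through $p$ as the coordinate subspace of a minimal vertex cover $W$, it evaluates the linear parts $d_p(x_ix_j)=p_jx_i+p_ix_j-2p_ip_j$ of the edge generators and uses the minimality of $W$ (each $x_i\in W$ sits on an edge whose other endpoint is outside $W$) to show $T_p(X)$ is cut out exactly by the variables in $W$, so $\dim T_p(X)=\dim_p(X)$. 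You instead localize away the other components: since $\p_i\not\subseteq\mathfrak{m}_p$ for every component not containing $p$, those primes become the unit ideal after localizing, so $\mathcal{O}_{X,p}\cong\mathcal{O}_{V_1,p}$, which is regular because a coordinate subspace is smooth. Your route is shorter, avoids any Jacobian computation, and makes no use of the graph or vertex-cover structure at all---it works verbatim for any variety whose irreducible components are smooth, with the singular direction valid for any (reduced) variety. What the paper's computation buys is that it stays entirely within the elementary CLO framework (tangent spaces and $\dim_p$) that the rest of the paper quotes, and it exhibits concretely how the vertex-cover description of the components controls the tangent space. If you keep your version, you should add two small bridges: a sentence noting that at a rational point regularity of $\mathcal{O}_{X,p}$ is equivalent to the tangent-space criterion $\dim T_p(X)=\dim_p(X)$ used as the definition of smoothness here, and the one-line ideal-theoretic justification that $X$ and $V_1$ having the same germ at $p$ means $I(X)$ and $\p_1$ have the same localization at $\mathfrak{m}_p$; you should also quote the standard fact that a regular local ring is a domain, which is not proved in the references the paper uses.
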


\begin{proof}
	Let $X$ be described by the irredundant irreducible decomposition $F_1 \cup \dots \cup F_r$.  In \cref{bg} we saw that each of these components corresponds to a minimal vertex cover of $G$, and therefore represents a coordinate subspace of $X$.
	Take $F^* = \bigcup_{i \neq j} F_i \cap F_j$.  Given a point $p \in F^*$ it follows immediately from \cite[theorem 9.6.8]{CLO} that $p \in X_{sing}$.

	For the opposite containment, choose $p=(p_1,\dots,p_n) \in X$ such that $p$ lies on one and only one component of $X$, say $p \in F_{\alpha}$, which is a coordinate subspace defined by the elements of some minimal vertex cover $W_{\alpha} = \{ x_{\alpha_1}, \dots, x_{\alpha_t} \}$.
	$X$ is also a variety defined by the edges of graph;
	if $e_{i,j} = x_ix_j$ corresponds to $\{ x_i, x_j \} \in E(G)$, then $X = \mathcal{V}(e_{i,j} \ | \ \{ x_i, x_j \} \in E(G))$.
	We use both of these descriptions to show $p$ is smooth by following the procedure, outlined in \cite[9.6]{CLO}.
	First, since $p$ is contained only in the irreducible component $F_\alpha$, $\dim_p(X) = \dim_p(F_\alpha)$ \cite[definition 9.6.6]{CLO}.
	Next, we can describe the tangent space of $X$ at $P$
	\begin{equation*}
		T_p(X) = \mathcal{V}(d_p(e_{i,j}) \ | \ e_{i,j} \in \mathcal{I}(X))
	\end{equation*}
	where
	\begin{equation}\label{linear part}
		d_p(e_{i,j}) = \frac{\partial x_ix_j}{\partial x_i}(p)(x_i-p_i) + \frac{\partial x_ix_j}{\partial x_j}(p)(x_j-p_j) = p_jx_i + p_ix_j - 2p_ip_j
	\end{equation}
	is the linear part of $e_{i,j}$ at $p$ \cite[definition 9.6.1]{CLO}.
	Since $p$ lies in a component described by the minimal vertex cover $W$, each edge of $G$ has at least one vertex in $W$, so at least one of $p_i$ or $p_j$ must be zero.
	Furthermore, the minimality of $W$ guarantees we can find an edge $\{ x_i, x_j \}$ whose second vertex is not in $W$, that is either $p_i=0$ or $p_j=0$ but not both.
	Then \cref{linear part} shows $T_p(X)$ is defined by  $\{ p_jx_i \ | \ x_i \in W\}$, which, working over a field, is identical to the generating set of $W$.  Therefore $\dim T_p(X) = \dim_p(X)$ and $p$ is non-singular, showing $X_{sing} \subseteq F^*$ by contraposition.
\end{proof}

From the decomposition $X=F_1 \cup \cdots \cup F_r$ we can write the smooth locus of $X$ as
\begin{equation}\label{smooth open union}
	X_{smooth} = X \backslash X_{sing}
		= (F_1 \cup \cdots \cup F_n) \backslash X_{sing}
		= F_1 \backslash X_{sing} \cup \cdots \cup F_n \backslash X_{sing}
		= U_1 \cup \cdots \cup U_n
\end{equation}
where the $U_i$ are proper subsets of the $F_i$.
This is evident since each component of $X$ is a coordinate subspace of $\mathbb{A}^n$ contained in $X$, which implies that $0$ is an element of the singular locus of $X$ and each of its irreducible components.
Each $U_i$ of \cref{smooth open union} is therefore open in its corresponding component $F_i$ and, by \cref{open components} open in the whole of $X$ as well.
The following theorem gives a description of the $s$-order principal component of a graph.

\begin{theorem}\label{pc vc theorem}
	Let $G$ be a graph with minimal vertex covers $W_1, \dots, W_m$.
	Then the $s$-order principal component of $G$ is precisely the union $\mathcal{V}(\langle \J_s(W_1)\rangle) \cup \dots \cup \mathcal{V}(\langle \J_s(W_m) \rangle)$.
\end{theorem}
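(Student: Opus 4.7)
The plan is to reduce the theorem to an irreducible-component-by-irreducible-component analysis. From \eqref{smooth open union} and \cref{open components}, write $X_{smooth} = U_1 \cup \cdots \cup U_m$ with $U_i := F_i \setminus \bigcup_{j\neq i}(F_i \cap F_j)$ open in the coordinate subspace $F_i = \mathcal{V}(\langle W_i \rangle)$. Using the projection $\pi_s : \J_s(X) \to X$ recalled in \cref{bg}, I would write the $s$-jets of $X_{smooth}$ as $\pi_s^{-1}(U_1) \cup \cdots \cup \pi_s^{-1}(U_m)$; since Zariski closure distributes over finite unions, the theorem reduces to showing
\[
\overline{\pi_s^{-1}(U_i)} \;=\; \mathcal{V}(\langle \J_s(W_i) \rangle)
\]
for each $i$.

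The right-hand side equals $\J_s(F_i)$ by \cref{cord subspace}, and is a coordinate subspace of $\mathbb{A}^{n(s+1)}_k$, hence irreducible. The heart of the proof is the containment $\pi_s^{-1}(U_i) \subseteq \J_s(F_i)$: a jet $\gamma$ whose base point $p = \pi_s(\gamma)$ lies in $U_i$ should satisfy every generator of $\langle \J_s(W_i)\rangle$. Since $p$ belongs to no other component, the coordinate $y\jo{0}$ vanishes at $p$ for every $y \in W_i^C$; for each $x \in W_i$, minimality of $W_i$ (\cref{vertex covers remark}) supplies an edge $\{x,y\}$ with $y \in W_i^C$, and the jet relations $\phi_s(xy) = 0$ displayed in \cref{jets edges} then force $y\jo{1},\ldots,y\jo{s}$ to vanish as well by an induction on order essentially identical to the one used in \cref{colon lemma}. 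This pins $\gamma$ inside $\J_s(F_i)$.

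With the containment in hand, $\pi_s^{-1}(U_i)$ is open in the irreducible variety $\J_s(F_i)$ (as $U_i$ is open in $F_i$ and $\pi_s$ is continuous), and is nonempty since the constant jet over any point of $U_i$ lies in it. Hence $\pi_s^{-1}(U_i)$ is dense in $\J_s(F_i)$, and $\overline{\pi_s^{-1}(U_i)} = \J_s(F_i) = \mathcal{V}(\langle \J_s(W_i) \rangle)$, which together with the component decomposition yields the theorem. The main obstacle I anticipate is this containment step: it is essential that the smooth point $p$ sitting on only one component of $X$ is enough to pin a \emph{jet} over $p$ to the same component, and the argument depends crucially on the minimality of $W_i$ to trigger the \cref{colon lemma}-style cancellation among the higher-order coefficients.
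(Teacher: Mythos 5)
Your skeleton coincides with the paper's: decompose $X_{smooth}=U_1\cup\cdots\cup U_m$ with $U_i$ open in $F_i=\mathcal{V}(\langle W_i\rangle)$, pull back along $\pi_s$, distribute the Zariski closure over the finite union, and reduce everything to the density of an open set inside the irreducible coordinate subspace $\J_s(F_i)$. Where you diverge is exactly at the step you call the heart, the containment $\pi_s^{-1}(U_i)\subseteq\J_s(F_i)$. The paper gets this for free by applying \cref{EM open sets} a second time, now to $U_i$ viewed as an open subset of $F_i$: because jets are intrinsic to $U_i$, one has $\pi_s^{-1}(U_i)=\J_s(U_i)=(\pi_s^{F_i})^{-1}(U_i)$, which is automatically contained in (and open in) $\J_s(F_i)$. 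You instead attempt a direct coordinate computation, and as written that computation has a genuine gap.

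First, the roles of $W_i$ and $W_i^C$ are swapped: at a point $p\in F_i$ the coordinates that vanish are those indexed by $W_i$ (the $W_i^C$-coordinates are the free ones, and if all of them also vanished then $p=0\in X_{sing}$), and what you must show is that $x\jo{1},\dots,x\jo{s}$ vanish on the jet for every $x\in W_i$, not that $y\jo{l}$ vanishes for $y\in W_i^C$. More seriously, even after correcting this, minimality of $W_i$ only provides \emph{some} edge $\{x,y\}$ with $y\in W_i^C$; the \cref{colon lemma}-style induction (from $x\jo{0}(\gamma)=\cdots=x\jo{l-1}(\gamma)=0$ and $\sum_{a+b=l}x\jo{a}y\jo{b}=0$ deduce $x\jo{l}(\gamma)\,y\jo{0}(\gamma)=0$) pins down $x\jo{l}(\gamma)=0$ only when $y\jo{0}(\gamma)=p_y\neq 0$, and nothing in your sketch guarantees the neighbor supplied by \cref{vertex covers remark} has nonzero coordinate at $p$. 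To close the gap you must use the hypothesis $p\in U_i$ more substantively: if every neighbor $y$ of $x$ had $p_y=0$, then all coordinates of $p$ indexed by the vertex cover $(W_i\setminus\{x\})\cup N(x)$ (where $N(x)$ is the set of neighbors of $x$) would vanish, so $p$ would lie on $\mathcal{V}(\langle W_j\rangle)$ for some minimal vertex cover $W_j\subseteq (W_i\setminus\{x\})\cup N(x)$; since $x\notin W_j$ we have $W_j\neq W_i$, contradicting that $p$ lies on only one component. With that extra argument your computation goes through and the remainder of your proof (openness of $\pi_s^{-1}(U_i)$ in $\J_s(F_i)$, irreducibility, density, nonemptiness via constant jets) matches the paper; alternatively, quoting \cref{EM open sets} for $U_i\subseteq F_i$ replaces the computation entirely.
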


\noindent To prove this result, we will appeal to the following lemma of Ein and Musta\cb{t}\u{a}.

\begin{lemma}\cite[lemma 2.3]{EM}\label{EM open sets}
	If $U \subseteq X$ is an open subset and if $\J_s(X)$ exists, then $\J_s(U)$
exists and $\J_s(U) = \pi^{-1}_s (U)$.
\end{lemma}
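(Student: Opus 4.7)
The plan is to prove the lemma via the functor-of-points characterization of jet schemes. First I would establish the universal property that, for any $k$-algebra $A$, the set $\J_s(X)(A)$ of $A$-valued points is naturally in bijection with $\mathrm{Hom}(\mathrm{Spec}(A[t]/\langle t^{s+1}\rangle),\, X)$, and that $\pi_s$ corresponds under this bijection to reduction modulo $t$ (precomposition with the closed immersion $\mathrm{Spec}(A) \hookrightarrow \mathrm{Spec}(A[t]/\langle t^{s+1}\rangle)$). Starting from the explicit presentation $\J_s(X) = \mathrm{Spec}(\J_s(R)/\J_s(I))$, this follows by reading off coefficients in $t$: specifying a ring map $R/I \to A[t]/\langle t^{s+1}\rangle$ is equivalent to choosing values $x_i\jo{0}, \ldots, x_i\jo{s} \in A$ satisfying the relations $\alpha\jo{l}_j = 0$ from \cref{alphas}, which is exactly the data of an $A$-point of $\J_s(R)/\J_s(I)$.

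With the universal property secured, the core of the argument is topological: $t$ is nilpotent in $A[t]/\langle t^{s+1}\rangle$, so the inclusion $\mathrm{Spec}(A) \hookrightarrow \mathrm{Spec}(A[t]/\langle t^{s+1}\rangle)$ is a homeomorphism on underlying spaces. Consequently a morphism $\varphi\colon \mathrm{Spec}(A[t]/\langle t^{s+1}\rangle) \to X$ factors through the open subscheme $U$ if and only if its reduction $\bar{\varphi}\colon \mathrm{Spec}(A) \to X$ does, and $\bar{\varphi}$ is precisely $\pi_s$ applied to the $A$-point of $\J_s(X)$ corresponding to $\varphi$. This yields
\begin{equation*}
    \J_s(U)(A) \;=\; \{\varphi \in \J_s(X)(A) \mid \pi_s(\varphi) \in U(A)\} \;=\; \pi_s^{-1}(U)(A),
\end{equation*}
naturally in $A$, and Yoneda then gives the equality $\J_s(U) = \pi_s^{-1}(U)$. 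Since the right-hand side is an open subscheme of the (assumed to exist) scheme $\J_s(X)$, this simultaneously establishes the existence of $\J_s(U)$.

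The main obstacle I anticipate is that when $U$ is not itself affine, the paper's concrete generators-and-relations construction of $\J_s(-)$ does not apply to $U$ directly, so the symbol $\J_s(U)$ must first be assigned a meaning --- most naturally as the functor $A \mapsto U(A[t]/\langle t^{s+1}\rangle)$, whose representability is not obvious a priori. The functor-of-points argument above resolves both issues at once: it exhibits this functor as represented by the open subscheme $\pi_s^{-1}(U) \subseteq \J_s(X)$. If one prefers to stay closer to the concrete construction, the same nilpotence argument can be carried out on distinguished affine opens $D(f) \subseteq X$ contained in $U$, where $\J_s(D(f))$ can be described explicitly by inverting $\phi_s(f)$; the resulting local identifications then glue to give $\J_s(U) = \pi_s^{-1}(U)$ globally.
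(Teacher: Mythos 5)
Your proposal is correct, and since the paper does not reprove this lemma but simply cites \cite[lemma 2.3]{EM}, the right comparison is with that source: your functor-of-points argument (jets as $\mathrm{Hom}(\mathrm{Spec}(A[t]/\langle t^{s+1}\rangle), X)$, with $\pi_s$ given by reduction modulo the nilpotent $t$, so that factoring through the open subscheme $U$ is a purely topological condition detected by $\pi_s$) is essentially the same argument Ein and Musta\cb{t}\u{a} give. Your closing observation that $\pi_s^{-1}(U)$ is an open subscheme of the already-existing $\J_s(X)$, which settles the existence claim for $\J_s(U)$ even when $U$ is not affine, is exactly the point the lemma is recording, so nothing is missing.
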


\begin{proof}[Proof of \cref{pc vc theorem}]
Let $X = \mathcal{V}(I(G))$.
Then \cite[corallary 1.35]{VT} guarantees a decomposition
\begin{equation*}
	X = \mathcal{V}(\langle W_1 \rangle) \cup \dots \cup \mathcal{V}(\langle W_m \rangle).
\end{equation*}
Letting $F_i = \mathcal{V}(\langle W_i \rangle)$, we can use \cref{smooth open union} to write
\begin{equation*}
	\J_s(X_{smooth}) = \J_s(U_1 \cup \cdots \cup U_n),
\end{equation*}
describing the $s$-jets of the smooth locus of $X$.
Because $X_{smooth}$ is open in $X$ \cite[theorem 5.3]{RH}, we can apply \cref{EM open sets}:
\begin{equation}\label{proj union1}
	\J_s(X_{smooth}) = \pi^{-1}_s(U_1 \cup \cdots \cup U_n) = \pi^{-1}_s(U_1) \cup \cdots \cup \pi^{-1}_s(U_n),
\end{equation}
and since each $U_i$ is open in $X$, applying \cref{EM open sets} again yields
\begin{equation}\label{proj union2}
	\pi^{-1}_s(U_1) \cup \cdots \cup \pi^{-1}_s(U_n) = \J_s(U_1) \cup \cdots \cup \J_s(U_n).
\end{equation}
Paired with the fact that the Zariski closure of a union is the union of its Zariski closures \cite[lemma 4.4.3, (iii)]{CLO}, \cref{proj union1} and \cref{proj union2} give an expression for the principal component of $X$ as a union of closures of jets of open sets:
\begin{equation*}
	\overline{\J_s(X_{smooth})} = \overline{\J_s(U_1)} \cup \cdots \cup \overline{\J_s(U_n)}.
\end{equation*}
Returning to the decomposition of $X$, consider an arbitrary component $F_i$ of $X$ along with the following facts:
\begin{enumerate}
	\item \emph{$\J_s(U_i)$ is open in $\J_s(F_i)$} since the projection $\pi^{F_i}_s: \J(F_i) \longrightarrow F_i$ is continuous, and $U_i$ is an open subset of $F_i$.  That is, the preimage $(\pi^{F_i}_s)^{-1}(U_i) = \J_s(U_i)$ is open in $\J_s(F_i)$.
	\item \emph{$\J_s(F_i)$ is irreducible} since it is a coordinate subspace (see \cref{cord subspace}).
\end{enumerate}
Taken together, they show $\J_s(U_i)$ is Zariski dense in $\J_s(F_i)$ \cite[proposition 4.5.13]{CLO}.
Therefore the Zariski closure of $\J_s(U_i)$ is $\J_s(F_i)$ and we conclude
\begin{equation*}
	\overline{\J_s(X_{smooth})} = \J_s(F_1) \cup \dots \cup \J_s(F_m) = \mathcal{V}(\langle \J_s(W_1)\rangle) \cup \dots \cup \mathcal{V}(\langle \J_s(W_m) \rangle)
\end{equation*}
\end{proof}

With this description of the prinipal component, we can find its corresponding ideal.

\begin{proposition}\label{ideal of pc}
	The ideal of the $s$-order principal component of a graph is given by
	\begin{equation*}
		\langle x\jo{i} y\jo{j} \ | \ \{x,y \} \in E(G) \rangle
	\end{equation*}
\end{proposition}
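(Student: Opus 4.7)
The plan is to combine \cref{pc vc theorem} with standard facts about intersections of coordinate-variable ideals. By \cref{pc vc theorem}, the principal component is the union of the coordinate subspaces $\mathcal{V}(\langle \J_s(W_\alpha) \rangle)$ over the minimal vertex covers $W_1,\dots,W_m$ of $G$, so its defining ideal is $J = \bigcap_{\alpha=1}^m \langle \J_s(W_\alpha) \rangle$. The goal reduces to showing this intersection equals $K = \langle x\jo{i} y\jo{j} \mid \{x,y\} \in E(G) \rangle$, which I would do by double containment.

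The inclusion $K \subseteq J$ is immediate: for any edge $\{x,y\}$ of $G$ and any minimal vertex cover $W_\alpha$, at least one of $x,y$ lies in $W_\alpha$, so at least one of $x\jo{i}, y\jo{j}$ lies in $\J_s(W_\alpha)$, making $x\jo{i}y\jo{j} \in \langle \J_s(W_\alpha) \rangle$ for every $\alpha$.

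For $J \subseteq K$, the key observation is that $J$ is a squarefree monomial ideal, since each $\langle \J_s(W_\alpha)\rangle$ is generated by a subset of the variables of $\J_s(R)$. So it suffices to show every monomial generator $m$ of $J$ is divisible by some $x\jo{k}y\jo{\ell}$ with $\{x,y\} \in E(G)$. I would project to the base by letting $S \subseteq V(G)$ be the set of vertices $x$ for which some $x\jo{k}$ divides $m$. Membership $m \in \langle \J_s(W_\alpha) \rangle$ forces $S \cap W_\alpha \neq \varnothing$ for every $\alpha$, so the squarefree monomial $\prod_{x \in S} x \in R$ belongs to every $\langle W_\alpha \rangle$ and hence to $I(G) = \bigcap_\alpha \langle W_\alpha \rangle$ by \cite[corollary 1.35]{VT}. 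Since $I(G)$ is generated by edge monomials, some edge generator $xy$ of $I(G)$ divides $\prod_{x \in S} x$, producing two distinct vertices $x,y \in S$ with $\{x,y\} \in E(G)$. Lifting back, $m$ is divisible by some $x\jo{k}y\jo{\ell}$, which is a generator of $K$.

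The main obstacle I anticipate is the $J \subseteq K$ step, specifically the bridge between jets-level monomials and base-level edges. The clean formulation comes from the projection $m \mapsto S$: it translates the jets-side condition ``$m$ hits every $\J_s(W_\alpha)$'' into the base-side condition ``$S$ meets every minimal vertex cover of $G$'', at which point Villarreal's intersection formula for $I(G)$ finishes the job. No delicate manipulation of jet indices is required once this projection argument is set up correctly.
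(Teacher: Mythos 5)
Your argument is correct, and the easy containment ($K \subseteq J$) matches the paper's word for word; but your proof of the reverse containment takes a genuinely different route. The paper goes through \cref{vc colon}: it rewrites each $\langle \J_s(W_\alpha) \rangle$ as the saturation $\J_s(I(G)):f_\alpha^{\infty}$, takes radicals, and reduces the intersection to the colon ideal $I(\J_s(G)) : \langle f_1,\dots,f_m\rangle$, from which a divisibility argument using \cref{vertex covers remark} extracts an edge monomial dividing a given element. You instead stay entirely inside the combinatorics of monomial ideals: since each $\langle \J_s(W_\alpha)\rangle$ is a prime generated by variables, a monomial $m$ lies in the intersection exactly when its support meets every $\J_s(W_\alpha)$; projecting the support of $m$ to the set $S$ of base vertices turns this into ``$S$ meets every minimal vertex cover,'' so $\prod_{v\in S} v \in I(G) = \bigcap_\alpha \langle W_\alpha\rangle$ by \cite[corollary 1.35]{VT}, an edge $\{x,y\}$ sits inside $S$, and lifting back gives distinct variables $x\jo{k}, y\jo{\ell}$ whose product divides $m$. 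Your version avoids the saturation machinery and the radical/colon manipulations from \cite{CLO} altogether, making the proposition essentially a one-step consequence of Villarreal's intersection formula at the base level; what the paper's route buys instead is the explicit presentation of the principal component's ideal as a colon of the edge ideal of $\J_s(G)$, which ties the proposition back to \cref{vc colon} and keeps that description available for later use.
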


\begin{proof}
	Let $G$ be a graph with vertices in $R$ and minimal vertex covers $W_1, \dots, W_m$.
	Since each $\langle \J_s(W_\alpha) \rangle$ is radical, by \cref{pc vc theorem} the ideal of the $s$-order principal component of $G$ is
	\begin{equation*}
		\langle \mathcal{V}(\langle \J_s(W_1)\rangle) \cup \dots \cup \mathcal{V}(\langle \J_s(W_m) \rangle) \rangle= \bigcap_{1 \leq \alpha \leq m} \langle \J_s(W_\alpha) \rangle.
	\end{equation*}
	Let $x\jo{i}y\jo{j} \in \tilde{I}$ with $\tilde{I}=\langle x\jo{i} y\jo{j} \ | \ \{x,y \} \in E(G) \rangle$ as above.
	Then for any given $\alpha \in \{ 1, \dots, m \}$, either $x \in W_\alpha$ or $y \in W_\alpha$ (or both) which implies $x\jo{i} \in \J_s(W_\alpha)$ for all $0 \leq i \leq s$ or $y\jo{j} \in \J_s(W_\alpha)$ for all $0 \leq j \leq s$.
	Since this is true for all $\alpha$, $x\jo{i}y\jo{j} \in \bigcap_{1 \leq \alpha \leq m} \langle \J_s(W_\alpha) \rangle$ and $\tilde{I} \subseteq \bigcap_{1 \leq \alpha \leq m} \langle \J_s(W_\alpha) \rangle$.\\
	\\
	For the opposite containment we apply \cref{vc colon} to the intersection, which yields
	\begin{equation*}
		\bigcap_{1 \leq \alpha \leq m} \langle \J_s(W_\alpha) \rangle = \bigcap_{1 \leq \alpha \leq m} \J_s(I(G)):f_\alpha^{\infty}
	\end{equation*}
	where $f_\alpha$ is the product of the elements of $\J_0(W_\alpha^C)$.
	Since each $\langle \J_s(W_\alpha) \rangle$ is radical, so is the intersection \cite[proposition 4.3.16]{CLO} and we can write
	\begin{equation*}
		\bigcap_{1 \leq \alpha \leq m} \J_s(I(G)):f_\alpha^{\infty} = \sqrt{\bigcap_{1 \leq \alpha \leq m} \J_s(I(G)):f_\alpha^{\infty}} = \bigcap_{1 \leq \alpha \leq m} \sqrt{\J_s(I(G)):f_\alpha^{\infty}}.
	\end{equation*}
	Using \cite[proposition 4.4.9 (iii) and proposition 4.4.13 (i)]{CLO} we can further reduce this expression to conclude
	\begin{equation*}
		\bigcap_{1 \leq \alpha \leq m} \langle \J_s(W_\alpha) \rangle = \sqrt{\J_s(I(G))} : \langle f_1, \dots, f_m \rangle = I(\J_s(G)) : \langle f_1, \dots, f_m \rangle,
	\end{equation*}
	which is the quotient of an edge ideal.
	Now choose a monomial $g \in I(\J_s(G)) : \langle f_1, \dots, f_m \rangle$.
	Then for each $f_\alpha$, $gf_\alpha$ is a monomial in $I(\J_s(G))$, and therefor a multiple of some edge monomial of $\J_s(G)$, say $gf_\alpha = \rho_\alpha (x\jo{i}y\jo{j})$ where $\rho_\alpha \in \J_s(R)$ and $\{ x,y \}$ is an edge of the base graph $G$.
	If $x\jo{i}$ divides $f_\alpha$ for all $\alpha$ then $x \in W_\alpha^C$ for all $\alpha$ which implies $y \in W_\alpha$ for all $\alpha$.
	This contradicts \cref{vertex covers remark}.
	Therefore $x\jo{i}$ does not divide $f_\alpha$ for some $\alpha$ which implies $x\jo{i}$ must divide $g$.
	By a similar argument $y\jo{j}$ must divide $g$, and we conclude that the product $x\jo{i}y\jo{j}$ divides $g$ and $g \in \tilde{I}$, proving the containment $\bigcap_{1 \leq \alpha \leq m} \langle \J_s(W_\alpha) \rangle \subseteq \tilde{I}$.
\end{proof}

\begin{corollary}\label{pc graph}
	The ideal of the principal component of a graph is itself the edge ideal of a graph.
\end{corollary}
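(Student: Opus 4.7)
The plan is to read the corollary directly off of \cref{ideal of pc}, which already gives an explicit description of the ideal of the principal component. What remains is to check that this description presents the ideal as a squarefree monomial ideal generated in degree two, since by \cref{graphdefs} any such ideal in $\J_s(R)$ is (by definition) the edge ideal of some graph on the jets variables.

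First I would recall from \cref{ideal of pc} that the ideal of the $s$-order principal component is
\begin{equation*}
\tilde{I} = \langle x\jo{i} y\jo{j} \mid \{x,y\} \in E(G),\ 0 \leq i, j \leq s \rangle,
\end{equation*}
and observe that each generator is a product of two distinct indeterminates of $\J_s(R)$: since $G$ is simple, the endpoints of an edge $\{x,y\}$ satisfy $x \neq y$, so the jets variables $x\jo{i}$ and $y\jo{j}$ are distinct indeterminates of $\J_s(R)$ regardless of the choice of $i$ and $j$. Thus every listed generator is a squarefree quadratic monomial, and distinct pairs $(x\jo{i}, y\jo{j}) \neq (x\jo{i'}, y\jo{j'})$ give incomparable monomials, so this list is already the minimal generating set.

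I can therefore define $H$ to be the graph with vertex set
\begin{equation*}
V(H) = \{ x\jo{i} \mid x \in V(G),\ 0 \leq i \leq s \}
\end{equation*}
and edge set
\begin{equation*}
E(H) = \{ \{x\jo{i}, y\jo{j}\} \mid \{x,y\} \in E(G),\ 0 \leq i, j \leq s \},
\end{equation*}
and conclude $\tilde{I} = I(H)$. There is no substantive obstacle anticipated; the corollary is essentially a bookkeeping consequence of the previous proposition. One remark worth including is that, by \cref{jets edges lemma}, the graph $\J_s(G)$ has edge $\{x\jo{i}, y\jo{j}\}$ only when $i + j \leq s$, whereas $H$ imposes no such constraint, so in general $H$ contains $\J_s(G)$ as a proper subgraph.
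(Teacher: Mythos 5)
Your proposal is correct and matches the paper's (implicit) argument: the corollary follows immediately from \cref{ideal of pc} by observing that the ideal is a squarefree quadratic monomial ideal, hence the edge ideal of the graph on the jets variables with edges $\{x\jo{i}, y\jo{j}\}$ for $\{x,y\} \in E(G)$, which is exactly the graph $PC_s(G)$ recorded in \cref{pc notation}. Your closing remark distinguishing this graph from $\J_s(G)$ (where $i+j \leq s$ is required) is accurate and consistent with the paper.
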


To end this section we give the following summary of notation and definitions for the $s$ order principal component of relevant objects:\\
\begin{remark}\label{pc notation}
	Let $G$ be a graph with vertices in $R$, edge ideal $I$, vertex covers $W_1, \dots, W_r$ and corresponding variety $V= \mathcal{V}(I)$.  Then
	\begin{enumerate}
		\item $PC_s(V) = \bigcup_{1 \leq \alpha \leq r} \mathcal{V}(\langle \J_s(W_\alpha)) \rangle$
		\item $PC_s(I) = \langle x\jo{i} y\jo{j} \ | \ \{x,y \} \in E(G) \rangle$
		\item $PC_s(G) = \{ V(\J_s(G)), \tilde{E}(G) \}$ where $\tilde{E}(G) = \{ \{ x\jo{i}, y\jo{j} \} \ | \ \{ x,y \} \in E(G) \}$
	\end{enumerate}
\end{remark}

\section{Chordal graphs and Fr\"{o}berg's theorem}

For any graph, a cycle of length $l$ is a sequence of vertices denoted $(x_1\ x_2 \cdots \ x_l\ x_1)$ which form a closed path in $G$.
That is, for each adjacent pair $x_i \ x_j$ in the sequence, $\{ x_i, x_j \}$ is an edge of $G$.
If for some non-adjacent $x_i$ and $x_j$ in the sequence, $\{ x_i, x_j \}$ is an edge of $G$, it is said to be a chord of the cycle.
A minimal cycle is one which has no chords \cite[definition 2.10]{VT}.
We will consider only cycles where the $x_i$ are distinct, as any cycle containing a repeated variable (other than $x_1$) can be split into two cycles in an obvious way.
In this section we will use the following two properties of a graph.
\begin{definition}\cite[section 2]{VT}\label{complement and chordal}
	Let $G$ be a graph.  Then
	\begin{enumerate}
		\item The complementary graph of $G$ is given by $G^C = \{ V(G), E(G)^C \}$ where $E(G)^C = \{ \{ x, y \} \in V(G) \ | \ \{ x, y \} \notin E(G) \}$.
		\item $G$ is chordal if it has no minimal cycles of length greater than three.
	\end{enumerate}
\end{definition}

If a graph has a complement which is chordal, then it is itself referred to as \emph{cochordal}.
This quality is a condition of Fr\"{o}berg's theorem, which states that a graph is cochordal if and only if its edge ideal has a linear resolution \cite[theorem 2.13]{VT}.
We might ask how the jets of a graph interact with this theorem.
A description of the edges of the complement of the jets of a graph can be derived from \cite[lemma 2.4]{GHW}.
For a graph $G$, we can write the edge set of $\J_s(G)$ as
\begin{equation}\label{JG}
	E(\J_s(G)) = \{ \{ x\jo{i},y\jo{j} \} \subseteq V(\J_s(G)) \ | \ \{x, y \} \in E(G) \text{ and } i + j \leq s \}.
\end{equation}
then we can obtain the edge set of its complement by negating the conditions of the comprehention in \cref{JG}:
\begin{align}\label{JG complement}
	E(\J_s(G)^C) = \{ \{ x\jo{i},y\jo{j} \} \subseteq V(\J_s(G))\ | \ &\{x, y \} \in E(G) \text{ and } i + j > s, \nonumber \\
	\text{or } &\{ x, y \} \notin E(G) \}.
\end{align}
Notice that $x\jo{a}$ and $x\jo{b}$ are distinct vertices of $\J_s(G)$ when $a \neq b$, so this definition includes all edges of the form $\{ x\jo{a}, x\jo{b} \}$ with $a \neq b$ which correspond to a single vertex of the base graph, and therefore cannot correspond to one of its edges.

\begin{example}
	Let $G$ be the path of length three with edges $\{ x, y \}$, $\{ y, z \}$ and $\{ z, w \}$.
	Its complement $G^C$ has edges $\{ y, w \}$, $\{ x, w \}$, and $\{ x, z \}$ which is also a path of length three.
	Since $G^C$ has no cycles at all it cannot have a minimal cycle of length greater than three so it is chordal.
	$G$ is therefore cochordal.
	Now consider $\J_1(G)^C$.
	From \cref{JG complement} we see that $\{ x\jo{0}, z\jo{1} \}$, $\{ x\jo{0}, w\jo{0} \}$, $\{ y\jo{1}, w\jo{0} \}$, and $\{ z\jo{1}, w\jo{0} \}$ are all edges of $\J_1(G)^C$.
	Therefore it contains the cycle $(x\jo{0}\ z\jo{1}\ y\jo{1}\ w\jo{0} x\jo{0})$.
	But $\{ x\jo{0}, y\jo{1} \}$ and $\{ w\jo{0}, z\jo{1} \}$ are edges of $\J_s(G)$ and cannot be elements of the complement.  So we have found a minimal cycle of length four and the $1$-jets of $G$ are not cochordal.
\end{example}

\begin{theorem}\label{pc cochordal}
	Let $G$ be a cochordal graph.
	Then for ever integer $s \geq 0$, $PC_s(G)$ is cochordal.
\end{theorem}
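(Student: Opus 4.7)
The plan is to prove the contrapositive-style statement that $PC_s(G)^C$ contains no chordless cycle of length greater than three, by projecting any such cycle down to the base graph $G^C$ and obtaining a chordless cycle there, contradicting the chordality of $G^C$ which holds by assumption.

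First I would pin down adjacency in $PC_s(G)^C$. From \cref{pc notation}, distinct jet variables $u\jo{i}$ and $v\jo{j}$ are joined by an edge of $PC_s(G)$ exactly when $\{u,v\} \in E(G)$, with no restriction on the jet indices. Hence they are adjacent in the complement $PC_s(G)^C$ precisely when either (i) $u = v$ and $i \neq j$, or (ii) $u \neq v$ and $\{u,v\} \in E(G^C)$.

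Next I would suppose for contradiction that $PC_s(G)^C$ contains a chordless cycle $C : p_1 - p_2 - \cdots - p_l - p_1$ with $l \geq 4$, and write $p_k = u_k\jo{i_k}$ for each $k$ (indices mod $l$). The edges of $C$ force, for each consecutive pair $(k, k+1)$, that $u_k = u_{k+1}$ or $\{u_k, u_{k+1}\} \in E(G^C)$; chord-freeness forces, for each non-consecutive pair $(k, m)$, that $u_k \neq u_m$ and $\{u_k, u_m\} \in E(G)$. The crux is to show that the base projection has no consecutive coincidences. If $u_k = u_{k+1}$ for some $k$, then the pair $(k, k+2)$ is non-consecutive since $l \geq 4$, so $\{u_k, u_{k+2}\} \in E(G)$ with $u_k \neq u_{k+2}$. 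Substituting $u_{k+1} = u_k$ yields $\{u_{k+1}, u_{k+2}\} \in E(G)$ with $u_{k+1} \neq u_{k+2}$, directly contradicting the edge condition at the consecutive pair $(k+1, k+2)$. Thus all consecutive $u_k$ differ, and combined with the non-consecutive inequalities, all of $u_1, \ldots, u_l$ are distinct. Then $u_1 - u_2 - \cdots - u_l - u_1$ is a genuine cycle in $G^C$ whose consecutive pairs lie in $E(G^C)$ (their base vertices are distinct, so the first alternative of the edge condition is excluded) and whose non-consecutive pairs lie in $E(G)$, hence not in $E(G^C)$. This is a chordless cycle of length at least four in $G^C$, contradicting chordality of $G^C$.

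The main obstacle I anticipate is the case analysis controlling repetition of the base vertices $u_k$ along the cycle; once consecutive equalities are excluded by the short propagation argument above, everything reduces cleanly to Fr\"oberg-style chordality of the base complement, and the argument goes through uniformly for every $s \geq 0$.
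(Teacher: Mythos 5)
Your proposal is correct and follows essentially the same route as the paper's proof: characterize adjacency in $PC_s(G)^C$, rule out repeated base vertices along a long cycle (non-consecutive repetitions via chord-freeness, consecutive ones via the edge at distance two), and then project to a chordless cycle in $G^C$, contradicting chordality. The only difference is cosmetic: you argue by contradiction from a hypothetical chordless cycle, whereas the paper directly exhibits a chord in every cycle of length greater than three.
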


\begin{proof}
	Using \cref{pc notation} we can describe the edge set of the complement of the principal component of $G$ as
	\begin{equation*}
				E(PC_s(G)^C)= \{ \{ x\jo{i}, y\jo{j} \} \ | \ \{x, y \} \notin E(G) \}.
	\end{equation*}
	Notice from the definitions that $PC_s(G^C)$ is a subset of $PC_s(G)^C$.
	The containment is not reversible however, since $PC_s(G)^C$ contains edges $\{ x\jo{i}, y\jo{j} \}$ for which $x = y$ and $i \neq j$, but $PC_s(G^C)$ is restricted by the edge set of $G^C$.
	Now let $K = (x_1 \ x_2 \ \cdots \ x_l \ x_1)$ be a cycle of length $l > 3$ in $G^C$.  Then each adjacent pair $\{ x_i, x_j \}$ of the cycle is an edge of $G^C$ and, following the same indexing, each pair $\{ x_i\jo{a}, x_j\jo{b} \}$,  $0 \leq a,b \leq s$, is an edge of $PC_s(G^C)$.
	So $K$ gives rise to a family of cycles in $PC_s(G^C)$ which we denote
	\begin{equation*}
		\bar{K} = \{ (x_1\jo{a_1} \ x_2\jo{a_2} \ \cdots \ x_l\jo{a_l} \ x_1\jo{a_1}) \ | \ a_1, \dots, a_n \in \{0,1,\dots,s \} \}.
	\end{equation*}
	Since $G$ is cochordal, there is an edge $\{ x_{i^*}, x_{j^*} \}$ of $G^C$ where $x_{i^*}$ and $x_{j^*}$ are non-adjacent in $K$.
	Then $\{ x\jo{a}_{i^*}, x\jo{b}_{j^*} \}$ is an edge of $PC_s(G)^C$ for all $a,b$ with $0 \leq a,b \leq s$.
	So for an arbitrary cycle $K$ of $G^C$, every cycle in  $\bar{K}$ (which is a cycle of $PC(G^C) \subset PC(G)^C$) has a chord.\\
	Now let $\kappa= (x\jo{a_1}_{c_1} \ x\jo{a_2}_{c_2} \ \cdots \ x\jo{a_l}_{c_l} \ x\jo{a_1}_{c_1})$ be an arbitrary cycle in $PC_s(G)^C$ with $l > 3$.
	If the $x_{c_i}$ are distinct, then $\kappa$ corresponds to a cycle $K$ of $G^C$ which implies $\kappa \in \bar{K}$ and therefore has a chord.
	If the $x_{c_i}$ are not distinct (say $x_{c_i} = x_{c_j}$ for some $i \neq j$, $0 \leq i,j \leq l$ we must consider two cases:
	\begin{enumerate}
		\item If $x\jo{a_i}_{c_i}$ and $x\jo{a_j}_{c_j}$ are non-adjacent in $\kappa$, they form a chord since $PC_s(G)^C$ contains all edges of the form $\{ x\jo{a}, x\jo{b} \}$ with $a \neq b$ and $x \in V(G)$.
		\item If $x\jo{a_i}_{c_i}$ and $x\jo{a_j}_{c_j}$ are adjacent in $\kappa$, we can arrange the indices so that $\kappa$ contains a path $(x\jo{a_i}_{c_i} \ x\jo{a_j}_{c_i} \ x\jo{a_{j+1}}_{c_{j+1}})$.
		Since $\{ x\jo{a_j}_{c_j}, x\jo{a_{j+1}}_{c_{j+1}} \}$ is an edge in $PC_s(G)^C$ and $x_{c_i} = x_{c_j}$, $\{ x\jo{a_i}_{c_i}, x\jo{a_{j+1}}_{c_{j+1}} \}$ must also be an edge of $PC_s(G)^C$ by definition.
		Therefore $\kappa$ contains a chord.
	\end{enumerate}
	We conclude that $PC_s(G)^C$ is chordal, and $PC_s(G)$ is therefore cochordal.
\end{proof}
\begin{corollary}\label{fberg}
	It follows from Fr\"{o}berg's theorem that if $I(G)$ has a linear resolution, so does $PC_s(I(G))$.
\end{corollary}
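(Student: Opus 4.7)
The plan is to chain together Fr\"{o}berg's theorem (invoked twice) with \cref{pc cochordal} to get the desired conclusion. Fr\"{o}berg's theorem is a biconditional statement: an edge ideal $I(H)$ has a linear resolution if and only if the graph $H$ is cochordal. So the argument reduces to verifying that $PC_s(I(G))$ is the edge ideal of a cochordal graph.

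First I would observe that, by \cref{pc notation}, the ideal $PC_s(I(G)) = \langle x\jo{i}y\jo{j} \mid \{x,y\} \in E(G) \rangle$ agrees exactly with the edge ideal of the graph $PC_s(G)$ whose edge set is $\tilde{E}(G) = \{\{x\jo{i},y\jo{j}\} \mid \{x,y\} \in E(G)\}$; this identification was already recorded in \cref{ideal of pc} and \cref{pc graph}, and it is the bridge that allows us to treat $PC_s(I(G))$ as an edge ideal of a graph to which Fr\"{o}berg's theorem can be applied.

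Next, assuming $I(G)$ has a linear resolution, the forward direction of Fr\"{o}berg's theorem \cite[theorem 2.13]{VT} tells us that $G$ is cochordal. Then \cref{pc cochordal} applies and yields that $PC_s(G)$ is cochordal. Applying the reverse direction of Fr\"{o}berg's theorem to the graph $PC_s(G)$ concludes that its edge ideal, which is $PC_s(I(G))$ by the identification above, has a linear resolution.

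There is no real obstacle here beyond making the chain of implications explicit; all three ingredients (the two directions of Fr\"{o}berg's theorem and \cref{pc cochordal}) are already in hand, and the only subtle point to flag is the identification of $PC_s(I(G))$ with the edge ideal of the graph $PC_s(G)$ so that Fr\"{o}berg's theorem can be legitimately applied to it.
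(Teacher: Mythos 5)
Your proposal is correct and matches the paper's intended argument: the corollary is stated immediately after \cref{pc cochordal} precisely so that both directions of Fr\"{o}berg's theorem, together with the identification of $PC_s(I(G))$ as the edge ideal of $PC_s(G)$ from \cref{ideal of pc} and \cref{pc graph}, yield the result. Your explicit flagging of that identification is exactly the right (and only) subtle point.
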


\begin{example}
	Let $G$ be the complete bipartite graph on the five vertices $x_1, \dots x_5$ with edge ideal
	\begin{equation*}
		I(G) = \langle x_{1}x_{4},\,x_{2}x_{4},\,x_{3}x_{4},\,x_{1}x_{5},\,x_{2}x_{5},\,x_{3}x_{5} \rangle.
	\end{equation*}
	The complement of this graph is the union of the path of length one and the $3$-cycle, which is chordal, so $G$ is cochordal, and by Fr\"{o}berg's theorem has a linear resolution with Betti table:\\
	\begin{equation*}
		\begin{matrix}
						& 0 & 1 & 2 & 3 & 4\\
				 \text{total:}
						& 1 & 6 & 9 & 5 & 1\\
				 0: & 1 & . & . & . & .\\
				 1: & . & 6 & 9 & 5 & 1
		 \end{matrix}
	\end{equation*}
	Then $\J_1(I(G))$ has a linear resolution with Betti table:
	\begin{equation*}
		\begin{matrix}
	            & 0 & 1 & 2 & 3 & 4 & 5 & 6 & 7 & 8 & 9\\
	        \text{total:}
	            & 1 & 24 & 96 & 194 & 246 & 209 & 120 & 45 & 10 & 1\\
	         0: & 1 & . & . & . & . & . & . & . & . & .\\
	         1: & . & 24 & 96 & 194 & 246 & 209 & 120 & 45 & 10 & 1
	  \end{matrix}
	\end{equation*}
	and $\J_2(I(G))$ has a linear resolution with Betti table:
	\begin{equation*}
		\begin{matrix}
      & 0 & 1 & 2 & 3 & 4 & 5 & 6 & 7 & 8 & 9 & 10 & 11 & 12 & 13 & 14\\
    \text{total:}
      & 1 & 54 & 351 & 1224 & 2871 & 4920 & 6399 & 6426 & 5004 & 3003 & 1365 & 455 & 105 & 15 & 1\\
      0: & 1 & . & . & . & . & . & . & . & . & . & . & . & . & . & .\\
      1: & . & 54 & 351 & 1224 & 2871 & 4920 & 6399 & 6426 & 5004 & 3003 & 1365 & 455 & 105 & 15 & 1
  	\end{matrix}
	\end{equation*}

\end{example}

We have seen that, in some cases, the principal component preserves some information about the resolution of an edge ideal.
It is natural to ask if we can use this fact to predict or recover any information.
For example, is there a connection between the Betti numbers of a cochordal graph and those of its $s$-order principal component.
The concept of jets could also be defined for simplicial complexes via their Stanley-Reisner ideals.
We can extend the definition of the principal component as well by replacing the edge ideal of a graph with the Stanley-Reisner ideal of a simplicial complex.
Some initial investigation indicates that this process may preserve some of the homological information of the base complex.
\appendix

\section{Calculating Jets with Macaulay2}\label{M2}

As a semester project, the author and his advisor constructed a package \cite{jets} for the Macaulay2 language \cite{M2} to work with jets in polynomial rings.
In this appendix we present some of the methods used to calculate the jets of a few objects in Macaulay2.

\subsection*{The radical of the $s$-jets of a monomial ideal}
The Jets package offers a method for calculating the radical of the jets of a monomial ideal based on \cite[theorem 3.1]{GS}.
The theorem states that, given a monomial ideal $I \subseteq R$, the $s$-jets of $I$, $\J_s(I) \subseteq \J_s(R)$, has a radical which is a squarefree monomial ideal.
The statement of the theorem describes the monomial generators of $\sqrt{\J_s(I)}$ as
\begin{equation*}
	\sqrt{x\jo{i_1}_1 x\jo{i_2}_1 \cdots x\jo{i_{a_1}}_1 x\jo{i_{1+a_1}}_2 \cdots x\jo{i_{a_1+a_2}}_2 \cdots x\jo{i_{a_1 + \cdots + a_r}}_r}
	\text{ where } \sum i_j \leq s \ \cite[\text{theorem 3.1}]{GS}
\end{equation*}
ranging over the minimal generators $x_1^{a_1} \cdots x_r^{a_r}$ of $I$.
This is a combinatorial description of the \emph{terms} of the coefficients of our polynomial in $t$ (which we labeled $\alpha_i$) as illustrated in \cref{jets edges}.
Using this idea, we define the function {\cd jetsRadical} in Macaulay2 which returns the radical of the jets of a monomial ideal without calculating a Gr\"{o}bner basis.
As we have seen, for each monomial generator of $I$, the $s$-jets of $I$ has corresponding generators for each power of $t$ up to $s$.
We can isolate the terms of each these generators by applying the {\cd terms} function to the result.
This gives us a list of monomials, whose radicals we find by taking the {\cd support}  of each (yielding a list of variables of $\J_s(R)$ present non-trivially in the monomial), and taking the {\cd product} of the elements of the result.
Running this process over each of the generators of $\J_s(I)$ gives a set of generators for $\sqrt{\J_s(I)}$.
It should be noted that this set is not necessarily a minimal generating set.

\subsection*{The principal component of an ideal}

The Jets package also provides a method {\cd principalCompoent} which, given any ideal $I$ of a polynomial ring, returns an ideal $I'$ such that $\mathcal{V}(I')$ is the Zariski closure of the smooth locus of $\mathcal{V}(I)$ embedded in the space of $s$-jets.  This ideal is a sort of generalization of the ideal of the variety defined in \cref{principal}.
The process of calculating it relies on \cite[theorem 4.4.10]{CLO} and we summarize here its description given in the documentation of \cite{jets}.
Denoting by $A$ the ideal of $X_{sing}$ and $J$ the ideal of $\J_s(X)$, the theorem shows that
\begin{equation*}
	\overline{\J_s(X_{smooth})} = \overline{\J_s(X) \backslash X_{sing}} = \overline{\mathcal{V}(J) \backslash \mathcal{V}(A)} = \mathcal{V}(J : A^{\infty})
\end{equation*}
This method returns the ideal $J : A^{\infty}$.
To accomplish this we call on the {\cd jetsProjection} method, which is an encoding of the canonical projection described in \cref{bg}.
Mapping the {\cd singularLocus} of the input ideal to its $0$-jets via the natural isomorphism, we apply {\cd jetsProjection} to get the result as $s$-jets.
The function {\cd saturate} applied to the jets of the input ideal and the projected ideal returns the desired ideal.

\bibliography{Iammarino_jpcgraphbbl}
\bibliographystyle{plain}
\end{document}